\newcommand{\argmin}[1]{\underset{#1}{\mathrm{argmin}}}
\newcommand{\argmax}[1]{\underset{#1}{\mathrm{argmax}}}
\newcommand{\prox}{\mathbf{prox}}
\newcommand{\proj}{\mathbf{proj}}
\newcommand{\diag}{\mathrm{diag}}
\newcommand{\sign}{\mathbf{sign}}
\newcommand{\mb}{\mathbf}
\newcommand{\vA}{{\mathbf{A}}}
\newcommand{\vU}{{\mathbf{U}}}
\newcommand{\vV}{{\mathbf{V}}}
\newcommand{\vR}{{\mathbf{R}}}
\newcommand{\vb}{{\mathbf{b}}}
\newcommand{\vg}{{\mathbf{g}}}
\newcommand{\vr}{{\mathbf{r}}}
\newcommand{\vu}{{\mathbf{u}}}
\newcommand{\vv}{{\mathbf{v}}}
\newcommand{\vx}{{\mathbf{x}}}
\newcommand{\vy}{{\mathbf{y}}}
\newcommand{\vz}{{\mathbf{z}}}
\newcommand{\PPM}{{\textsc{PPM}}}
\newcommand{\mC}{\mathcal{C}}
\newcommand{\mI}{\mathcal{I}}
\newcommand{\red}[1]{{\color{red}#1}}
\newcommand{\R}{\mathbb{R}}
\newtheorem{theorem}{Theorem}[section]  
\newtheorem{proposition}[theorem]{Proposition}  
\newtheorem{lemma}[theorem]{Lemma}  
\newtheorem{remark}[theorem]{Remark}  
\theoremstyle{definition}
\newtheorem{definition}{Definition}[section]  
\begin{document}

\title{Proximal Flow Inspired Multi-Step Methods}

\author{
    Yushen Huang \thanks{Stony Brook University, Dept. of Computer Science, \href{mailto:yushen.huang@stonybrook.edu}{yushen.huang@stonybrook.edu}} \and
    Yifan Sun \thanks{Stony Brook University, Dept. of Computer Science, \href{mailto:yifan.sun@stonybrook.edu}{yifan.sun@stonybrook.edu}}
}

\maketitle

\begin{abstract}
We investigate a family of approximate multi-step proximal point methods, framed as implicit linear discretizations of gradient flow. The resulting methods are multi-step proximal point methods, with similar computational cost in each update as the proximal point method. We explore several optimization methods where applying an approximate multistep proximal points method results in improved convergence behavior.
We also include convergence analysis for the proposed method in several problem settings: quadratic problems, general problems that are strongly or weakly (non)convex, and accelerated results for alternating projections.
\end{abstract}
\section{Introduction}
In this paper, we consider the following optimization problem:
\begin{equation}\label{eq:obj}
    \min_{\vx \in \mathbb{R}^n} \; F(\vx) = f(\vx) + h(\vx)  
\end{equation}
where $f(\vx)$ is an $L$-smooth function, $h(\vx)$ is a closed, convex but not necessary smooth function, and $F(\vx)$ is bounded below. 
The problem with the following settings has been raised in many applications in data science, machine learning, and image processing~\cite{tibshirani1996regression,yuan2006model,candes2012exact,friedman2008sparse}. 
In this paper, we consider a family of multi-step proximal point updates.
The algorithm is a generalization of the proximal point method (PPM)~\cite{moreau1965proximite} 
where we use a linear combination of the previous $\tau$ steps in each iteration, as follows:
\begin{equation}
\tilde \vx^{(k)} = \sum_{i=1}^\tau \xi_i \vx^{(k-\tau+i)}, \qquad 
    \vx^{(k+1)}  = 
    \mathcal F( \tilde{\vx}^{(k)}).
    \label{eq:main}
\end{equation}
Here, $\mathcal F$ is an approximate proximal point step. When $\tau = 1$ and $\xi_1 = 1$,  \eqref{eq:main} reduces to the ``vanilla" approximate proximal point method, of which there are many works  \cite{moreau1965proximite,asi2019stochastic,nesterov2021inexact,asi2020minibatch}.
In this paper, we investigate improvements garnered by higher order $\tau > 1$. Note that 
there is very little overhead in increasing $\tau$, as the averaging is done in an online manner. 
Moreover,  while tuning $\tau$ and $\xi_i$ to a wider range of values would often result in a more general class of methods, this paper focuses on choices of $\tau$ and $\xi_i$ are generalizable across variations of problems, to avoid cumbersome parameter tuning. 
%
%
This is often possible, for example, when $\xi_i$ and $\tau$ are inspired by \emph{implicit discretization methods of gradient flow}, which have been heavily studied \cite{su2014differential,shi2019acceleration,zhang2018direct,romero2020finite}. 
%
%
%
Specifically, we study \eqref{eq:main} where $\xi_i$ are inspired by the \emph{backwards differential formula} \cite{ascher1998computer}. 

%
%
We give convergence result of such extensions, under the presence of a $\gamma$ - contractive approximate proximal point method. 
We also demonstrate numerically that these methods give improvements in several important applications, such as proximal gradient in compressed sensing with both convex and nonconvex penalties,  alternating projections over linear subspaces, and alternating minimization for matrix factorization. 
\subsection{Related Work}

\paragraph{Proximal point method.} 


The proximal point method, originally introduced in~\cite{moreau1965proximite}, involves minimizing the following subproblem at each iteration:
 \[
\vx^{(k+1)} := \argmin{\vu}\; F(\vu) + \frac{1}{2\beta} \Vert \vu - \vx^{(k)} \Vert^2.
\]
In cases where the objective function $F$ is convex, this subproblem exhibits a larger strong convexity parameter, thereby enabling faster numerical methods. In practical applications, an approximate solution to the subproblem is obtained at each iteration. This is  achieved using methods such as stochastic projected subgradient~\citep{davis2019proximally, asi2019stochastic}, prox-linear algorithm~\citep{drusvyatskiy2018error}, and the catalyst generic acceleration schema~\citep{lin2015universal}.
Furthermore, the proximal point method can be generalized~\citep{nesterov2021inexact} by altering the penalty norm to 
$\Vert \cdot \Vert_2^{p+1}$, where $p\geq 1$.  Such generalizations often result in faster convergence rates compared to the standard proximal point methods.

\paragraph{Methods inspired by dynamical systems.}  



Several works have leveraged the insight that minimizing a function is equivalent to finding the stationary point of a dynamical system. For instance, \cite{su2014differential} analyzed the Nesterov accelerated gradient descent method as the discretization of a second-order ordinary differential equation. Similarly, \cite{shi2019acceleration} examined a method formed by employing a higher-order symplectic discretization of a related differential equation, as detailed in \cite{shi2021understanding}, to achieve acceleration. Others have pursued a similar approach using Runge-Kutta explicit discretizations, as demonstrated in \cite{zhang2018direct}. Some researchers have explored optimization methods by discretizing alternative flows, such as rescaled gradient flow \cite{wilson2019accelerating}.
A noteworthy example of optimization improvements inspired by discretization is the extragradient method \cite{korpelevich1976extragradient}, widely applied in min-max optimization problems \citep{du1995minimax} and variational inequality problems \citep{facchinei2003finite}. The connection between dynamical systems and optimization methods also appears in other works \citep{schropp2000dynamical, wilson2019accelerating, krichene2015accelerated, orecchia2018accelerated, sundaramoorthi2018variational}.



\subsection{Contributions}
In this paper, we do the following.
\begin{itemize}
    \item We propose a dynamical system that, when discretized either explicitly or implicitly, leads to gradient descent, the proximal gradient method, or the proximal point method. 

    \item We propose a higher order implicit discretization scheme, based on the backward differentiation formulas, which are computationally efficient extensions of existing methods, and offer stabelizing improvements to certain methods that come from unstable discretizations of gradient flow.

    \item We give convergence analyses on smooth quadratic problems, as well as convex, strongly convex, and nonconvex smooth problems.

    \item We apply our methods to several problems, such as proximal gradient over  nonconvex sparse regularization, alternating minimization and alternating projections. Numerically, we see that in these scenarios, multi-step methods improve noticeably over their single-step  versions. 

\item Finally, we show that in the case of alternating projections, the coefficients can be tuned to accelerate convergence.

\end{itemize}

The rest of this paper is organized as follows.
 Section \ref{sec:dynsystem} introduces the proximal flow ODE and shows that the solution is well-defined. 
Section \ref{sec:implicit_discr} introduces the BDF scheme as the multi-step implicit discretization.
 Sections \ref{sec:quad_conv}  and \ref{sec:conv_general} gives convergence analyses for quadratic and general functions, respectively.
Section \ref{sec:applications} applies the BDF discretization of the prox-flow ODE to four applications, and shows improved numerical behavior. 
Section \ref{sec:altproj} explores acceleration in alternating projection instances when $\xi$ is tuned, and section \ref{sec:conclusion} concludes the work.

\section{Dynamical systems characterization}

\label{sec:dynsystem}
In this paper, we assume all vector norms are 2-norms, e.g. $\|\cdot\| = \|\cdot\|_2$.

\subsection{Function classes}

\begin{definition}
A function $F:\mathbb{R}^n\to \mathbb{R}$ is \emph{smooth} if for all $\vx$, $\nabla F(\vx)$ exists. 
It is additionally \emph{$L$-smooth} if its gradient is $L$-Lipschitz:
\[
\|\nabla F(\vx) - \nabla F(\vy)\|\leq L\|\vx-\vy\|, \quad \forall \vx,\vy.
\]
\end{definition}
We now give a (relaxed) definition of convexity.
\begin{definition}
 $F$ is 
 \emph{$\mu$-convex at $\vx$}, if
\[
F(\vy) \geq F(\vx) + \langle \nabla F(\vx),\vy-\vx\rangle + \frac{\mu}{2}\|\vy-\vx\|^2. 
\]
for any $\vy$. 
If $F$ is $\mu$-convex for all points $\vx$, we say that $F$ is $\mu$-convex.
\end{definition}
Specifically, $\mu \geq 0$ implies $F$ is convex, and if $\mu < 0$ then the function may be nonconvex. 
Note that if $F$ is $L$-smooth, it is also $-L$-convex. 
However, the condition of  $\mu$-convex with a negative $\mu$ is more general; for example, the function $F(\vx) = \vx^4-\vx^2$  is not $L$-smooth nor convex, but is weakly convex ($\mu = -2$).

\subsection{The proximal flow}
%
Consider \eqref{eq:obj} where $f(\vx)$ is  $L$-smooth and $h(\vx)$ is a convex function. 
The local minimum of $F(\vx) = f(\vx) + h(\vx)$ is a stationary point of the following differential equation (DE), which we term the \emph{proximal flow}
%
%
%
\begin{equation}\label{eq:prox_flow}
    \dot{\vx}(t) = \lim_{\beta \to 0^+} \frac{\prox_{\beta F}(\vx(t)) - \vx(t)}{\beta}
\end{equation}
where
\[
\prox_{\beta F}(\vx) = \argmin{\vz}\,F(\vz) + \frac{1}{2\beta}\|\vx-\vz\|_2^2
\]
is the \emph{proximal operator} of the function $F:\R^n\to\R$ at point $\vx\in \R^n$. The parameter $\beta > 0$ is often used as a step length control.

\begin{proposition}\label{prop:proxflow_exists}
\eqref{eq:prox_flow} is well-defined; that is, the limit of its right-hand-side always exists and is attained. 
\end{proposition}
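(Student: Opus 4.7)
The plan is to use the first-order optimality conditions for the proximal operator to rewrite the difference quotient as a subgradient-type object, show that this family is bounded as $\beta \to 0^+$, and extract a unique limit through a minimum-norm variational argument. First I would verify that $\prox_{\beta F}(\vx)$ is well-defined for small $\beta$: since $f$ is $L$-smooth (hence $(-L)$-convex) and $h$ is convex, $F$ is $(-L)$-convex, so the map $\vz \mapsto F(\vz) + \frac{1}{2\beta}\|\vx-\vz\|^2$ is $(1/\beta-L)$-strongly convex for $\beta < 1/L$ and admits a unique minimizer $\vz^*(\beta)$. Its optimality condition gives
\[
\frac{\vz^*(\beta)-\vx}{\beta} = -\bigl(\nabla f(\vz^*(\beta)) + \vs(\beta)\bigr), \qquad \vs(\beta) \in \partial h(\vz^*(\beta)).
\]
Next I would establish $\vz^*(\beta) \to \vx$ at rate $O(\beta)$. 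Combining the strong-convexity lower bound $F(\vx) - F(\vz^*(\beta)) \geq (1/\beta - L/2)\|\vz^*(\beta)-\vx\|^2$ with the matching $(-L)$-convexity upper bound using any fixed $\vs_0 \in \partial h(\vx)$ yields $\|\vz^*(\beta)-\vx\| \leq \frac{\beta}{1-\beta L}\|\nabla f(\vx) + \vs_0\|$, which simultaneously shows $\vz^*(\beta)\to\vx$ and uniform boundedness of the family $\vg(\beta) := -\tfrac{1}{\beta}(\vz^*(\beta)-\vx) = \nabla f(\vz^*(\beta)) + \vs(\beta)$.

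The core step is identifying the limit. The plan is to exploit monotonicity of $\partial h$: for any $\vs_0 \in \partial h(\vx)$, $\langle \vs(\beta) - \vs_0, \vz^*(\beta) - \vx\rangle \geq 0$. Substituting the optimality condition and applying Cauchy--Schwarz produces the contraction $\|\vg(\beta)\| \leq \|\nabla f(\vz^*(\beta)) + \vs_0\|$. Passing $\beta \to 0^+$ and optimizing over $\vs_0$ gives
\[
\limsup_{\beta \to 0^+} \|\vg(\beta)\| \leq \inf_{\vs_0 \in \partial h(\vx)} \|\nabla f(\vx) + \vs_0\| =: \|\vg^*\|,
\]
where $\vg^*$ is the unique minimum-norm element of the closed convex set $\nabla f(\vx) + \partial h(\vx)$. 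Conversely, any accumulation point $\bar{\vg}$ of $\{\vg(\beta)\}$ lies in $\nabla f(\vx) + \partial h(\vx)$ by closedness of the subdifferential graph combined with $\vz^*(\beta)\to\vx$, so $\|\bar{\vg}\| \geq \|\vg^*\|$; strict convexity of the Euclidean norm then forces $\bar{\vg} = \vg^*$, and a bounded sequence with a unique accumulation point must converge. Hence $\vg(\beta) \to \vg^*$, and the limit in \eqref{eq:prox_flow} equals $-\vg^*$ and is attained.

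The main obstacle I anticipate is this last uniqueness step: because $\partial h(\vx)$ may be set-valued, there is no a priori guarantee that the selected subgradients $\vs(\beta) \in \partial h(\vz^*(\beta))$ converge as $\beta \to 0^+$, and one must combine monotonicity with the minimum-norm variational characterization to single out $\vg^*$. The preceding well-definedness and $O(\beta)$-continuity arguments are routine strongly-convex optimization computations and should go through without incident.
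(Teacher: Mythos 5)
Your proof is correct, but it takes a genuinely different route from the paper's. The paper works with the one-dimensional restrictions $g(\beta;\vv)=\vv^T(\prox_{\beta F}(\vx)-\vx)$, invokes local Lipschitzness of $F$ to argue these are differentiable near $\beta=0$, and concludes via l'H\^opital's rule; it never identifies what the limit actually is. You instead use the first-order optimality condition to write $\frac{1}{\beta}(\prox_{\beta F}(\vx)-\vx)=-(\nabla f(\vz^*(\beta))+\vs(\beta))$ with $\vs(\beta)\in\partial h(\vz^*(\beta))$, prove the $O(\beta)$ bound $\|\vz^*(\beta)-\vx\|\leq \frac{\beta}{1-\beta L}\|\nabla f(\vx)+\vs_0\|$, and then exploit monotonicity of $\partial h$ to get the contraction $\|\vg(\beta)\|\leq\|\nabla f(\vz^*(\beta))+\vs_0\|$, which together with closedness of the subdifferential graph and strict convexity of the norm forces every accumulation point to be the minimal-norm element of $\nabla f(\vx)+\partial h(\vx)$. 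This is the classical ``minimal selection'' argument from monotone operator theory, and it buys strictly more than the paper's proof: it not only shows the limit exists but identifies it explicitly as $-\vg^*$, which directly implies the inclusion $\dot\vx\in-\partial F(\vx)$ proved separately in Lemma~\ref{lem:exist_unique}. It is also, frankly, on firmer ground: the paper's step from ``$g(\cdot;\vv)$ is locally Lipschitz'' to ``$g(\cdot;\vv)$ is differentiable on a punctured neighborhood of $0$ and hence the one-sided limit exists'' is not valid for general Lipschitz functions, whereas every step of your argument is a standard, checkable computation. The one point worth making explicit when you write this up is the closedness of the graph of $\partial h$ (true for convex $h$) used to place the accumulation point inside $\nabla f(\vx)+\partial h(\vx)$; you flag this correctly as the delicate step and resolve it correctly.
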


\begin{proof}
We first show that for the 1-D parametrized function
\[
g(\beta;\vv) = \vv^T(\prox_{\beta F}(\vx) -\vx),
\]
the directional derivatives 
\begin{equation}
\lim_{\beta\to 0^+} \frac{g(\beta;\vv)-g(0;\vv) }{\beta} = \lim_{\beta\to 0^+} \frac{\vv^T(\prox_{\beta F}(\vx) -\vx) }{\beta} 
\label{eq:prox_exists_helper1}
\end{equation}
exist, for all $\vv$. To see this is true, note that since $F$ is a composition of an $L$-smooth and convex function, it is locally Lipschitz; therefore, $g$ is locally Lipschitz w.r.t. $\beta$. Therefore, either $g(\beta;\vv)$ is differentiable at $\beta = 0 $ (and thus $g(\beta;\vv)$ is continous at $\beta = 0$), or there exists a small neighborhood where, for all $|\beta|\leq \epsilon$, $\beta \neq 0$, $g(\beta;\vv)$ is differentiable. Therefore, the limit \eqref{eq:prox_exists_helper1} exists, for all $\vv$. 
Therefore, the partial derivative $\frac{\partial}{\partial \beta} \prox_{\beta F}(\vx)$ exists, and we may apply l'Hopital's rule and conclude that the limit in \eqref{eq:prox_flow} exists.

\end{proof}

This DE has a close connection with the Moreau envelope of $F$
\[
F_{\beta}(\vx) = \inf_{\vu}\; F(\vu) + \frac{1}{2\beta}\|\vx-\vu\|^2, \qquad 
\nabla F_{\beta}(\vx) = \frac{1}{\beta} \left ( \vx - \prox_{\beta F} (\vx) \right).
\]
Note that this gradient exists and is unique for all  $0<\beta < \frac{L}{2}$.
Also, when $F$ is smooth, \eqref{eq:prox_flow} reduces to vanilla gradient flow
\begin{equation}
\dot \vx(t) = -\nabla F(\vx(t)).
\label{eq:gradient_flow}
\end{equation}

\subsection{Existence and uniqueness} 
To show the existence and uniqueness of \eqref{eq:prox_flow}, we will first show that its right-hand-side always exists, and that it is a special instance of subgradient flow.

\begin{lemma} 
\label{lem:exist_unique}
Consider $F = f + h$ where $f$ is $L$-smooth and $h$ is convex. Then  the right-hand-side of \eqref{eq:prox_flow} always exists, and satisfies
\begin{equation}
\dot \vx(t) \in - \partial F(\vx(t))
\label{eq:subgradflow}
\end{equation}
where $\partial F(\vx(t))$ is the Clarke subdifferential of $F$ at $\vx(t)$ \citep{clarke1990optimization}
\begin{equation}
\partial F(\vx):=\{ \vg \in \R^n : F^D(\vx;\vv) \geq \langle \vv,\vg\rangle,\; \forall \vv \}
\label{eq:clarke-def1}
\end{equation}
with 
\[
F^D(\vx,\vv) := \limsup_{\vy\to \vx,\rho\to 0} \frac{F(\vy+\rho \vv)-F(\vy)}{\rho}.
\]
\end{lemma}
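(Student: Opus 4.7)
The plan is to leverage the first-order optimality conditions of the proximal subproblem together with the closed-graph property of the Clarke subdifferential. Existence of the right-hand-side of \eqref{eq:prox_flow} has already been established in Proposition \ref{prop:proxflow_exists}, so the remaining content of the lemma is the inclusion $\dot{\vx}(t) \in -\partial F(\vx(t))$.

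Write $\vz_\beta := \prox_{\beta F}(\vx)$. The first-order optimality condition for the proximal subproblem, together with the smoothness of $f$ and convexity of $h$, yields
\[
\frac{\vx - \vz_\beta}{\beta} - \nabla f(\vz_\beta) \in \partial h(\vz_\beta),
\]
where $\partial h$ is the ordinary convex subdifferential (which coincides with the Clarke subdifferential on convex functions). Next I would show that $\vz_\beta \to \vx$ as $\beta \to 0^+$: the defining inequality $F(\vz_\beta) + \tfrac{1}{2\beta}\|\vz_\beta - \vx\|^2 \leq F(\vx)$, combined with local Lipschitzness of $F$, gives $\|\vz_\beta - \vx\| = O(\beta)$. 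Continuity of $\nabla f$ then delivers $\nabla f(\vz_\beta) \to \nabla f(\vx)$.

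Denote $\vg := \lim_{\beta \to 0^+}(\vx - \vz_\beta)/\beta$, which exists by Proposition \ref{prop:proxflow_exists}. Passing to the limit in the optimality inclusion and using the closed-graph (upper semicontinuity) property of the convex subdifferential, I conclude $\vg - \nabla f(\vx) \in \partial h(\vx)$, so $\vg \in \nabla f(\vx) + \partial h(\vx)$. Since $f$ is Clarke regular (being $C^1$) and $h$ is Clarke regular (being convex), the Clarke sum rule holds with equality: $\partial F(\vx) = \nabla f(\vx) + \partial h(\vx)$. Hence $\vg \in \partial F(\vx)$, and the right-hand side of \eqref{eq:prox_flow} is $-\vg \in -\partial F(\vx(t))$, which is exactly \eqref{eq:subgradflow}.

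The main obstacle I anticipate is not any single hard step but rather the careful bookkeeping between three notions of subdifferential (proximal/convex subdifferential arising from the prox, convex subdifferential of $h$, and Clarke subdifferential of $F$) and the justification that the closed-graph property transfers the inclusion at $\vz_\beta$ to an inclusion at $\vx$ in the limit. These are classical facts from \citet{clarke1990optimization}, but they must be invoked explicitly; in particular, the closed-graph/upper-semicontinuity of $\partial h$ is what legitimizes the limit passage, and Clarke regularity of both summands is what makes the sum rule an equality rather than merely an inclusion.
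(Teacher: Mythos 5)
Your proof is correct, but it takes a genuinely different route from the paper's. The paper works directly with the support-function definition \eqref{eq:clarke-def1}: it takes $g_\beta = (\prox_{\beta F}(\vx)-\vx)/\beta \in -\partial F(\vx_\beta)$ from prox optimality, restricts $F$ to lines $\rho \mapsto F(\vy+\rho\vv)$ (locally Lipschitz, hence with isolated nonsmooth points) to control the generalized directional derivative, and concludes $F^D(\vx;\vv) \geq -\langle \vv, g\rangle$ for every direction $\vv$, which is precisely membership in \eqref{eq:clarke-def1}. You instead exploit the additive structure $F=f+h$: write the prox optimality condition as $\frac{\vx-\vz_\beta}{\beta}-\nabla f(\vz_\beta) \in \partial h(\vz_\beta)$, pass to the limit using $\vz_\beta\to\vx$, continuity of $\nabla f$, and the closed graph of the convex subdifferential, then assemble the conclusion via the Clarke sum rule with equality (valid since $f$ is strictly differentiable and $h$ is Clarke regular). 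Each of your steps is a standard citation to convex/Clarke calculus, which makes the argument more modular and arguably easier to audit; the paper's argument is more self-contained relative to its chosen definition of $\partial F$ and does not need the sum rule. One bookkeeping point you should make explicit: the paper defines $\partial F$ by \eqref{eq:clarke-def1} and warns that it can differ from the limiting-gradient representation \eqref{eq:clarke-def2}, so your final step must land in the set \eqref{eq:clarke-def1}; this is fine because for $F = f+h$ with $f$ of class $C^{1,1}$ and $h$ convex, $F$ is Clarke regular and $\partial F(\vx) = \nabla f(\vx)+\partial h(\vx)$ holds for the generalized-gradient set, but it deserves a sentence. A second minor point is that the prox is only guaranteed single-valued (and your optimality condition only well-posed) for $\beta < 1/L$, and the local Lipschitz bound giving $\|\vz_\beta-\vx\|=O(\beta)$ requires first confining $\vz_\beta$ to a fixed neighborhood of $\vx$; both are routine to patch.
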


A common alternative representation of the Clarke subdifferential set is the convex hull of all limiting gradients, when $F(\vx)$ is locally convex at $\vx$.
\begin{equation}
\partial F(\vx)=\mathbf{conv}\left(\{\vg : \exists \vu_i  \to \vx, \nabla F(\vu_i)\to \vg\}\right).
\label{eq:clarke-def2}
\end{equation}
However, note that \eqref{eq:clarke-def1} and \eqref{eq:clarke-def2} are not equivalent when $F$ is locally concave at $\vx$. (For example, if $F(\vx) = -|\vx|$, then \eqref{eq:clarke-def2} gives $\partial F(\vx) = [-1,1]$, although the support function doesn't exist at $\vx = 0$.)

\begin{proof}[Proof of Lemma \ref{lem:exist_unique}]
 If $\nabla F(\vx)$ exists, then it is trivially true that 
\[
\lim_{\beta\to 0^+}\frac{\prox_{\beta F}(\vx)-\vx}{\beta} = -\nabla F(\vx) \in -\partial F(\vx).
\]
 Let us consider only when $\nabla F(\vx)$ does not exist.
 Taking $\vx_\beta = \prox_{\beta F}(\vx)$, then 
the right-hand-side of \eqref{eq:prox_flow} is simply 
\[
g:=\frac{\vx_\beta-\vx}{\beta}\in -\partial F(\vx_\beta)
\]
where the inclusion is by optimality of the prox operator. Therefore, 
\[
\vv^Tg \in -\vv^T\partial F(\vx_\beta).
\]
Next, for a fixed $\vy$ and $\vv$, the function $\phi(\rho) = F(\vy + \rho \vv)$ is 1-D (for $\rho \geq 0$), locally Lipschitz  (because $F$ is locally Lipschitz)  and thus all its nonsmooth points are isolated. Therefore, the limit 
\[
\lim_{\rho\to 0^+} \frac{\phi(\rho)-\phi(0)}{\rho} =: \phi'(0^+) 
\]
exists, and moreover, $\phi'(0^+)\in  \vv^T\partial F(\vy)$. 
Therefore,
\[
F^D(\vx;\vv) =  \limsup_{\vy\to \vx} \{\alpha : \alpha \in \vv^T\partial F(\vy)\}
= \sup  \{\alpha : \alpha \in \vv^T\partial F(\vx)\} \geq -\vv^Tg.
\]
Therefore, $-g\in \partial F(\vx)$.

 \end{proof}
We now consider whether  \eqref{eq:prox_flow} always exists and is unique. Specifically, system \eqref{eq:prox_flow} is a differential inclusion. Solutions of such systems exist when $\partial F$ is an upper hemicontinuous map, and is unique when it satisfies a one-sided Lipschitz condition.

\begin{lemma} 
For $F = f+g$ where $f$ is $L$-smooth and $g$ is convex, then the negative of the Clarke subdifferential \citep{clarke1990optimization}  \eqref{eq:clarke-def1}
is always a closed and convex set, and is upper hemicontinuous
\[
\forall \vx, \; \exists \epsilon > 0,\;  \forall \vu:\|\vu-\vx\|\leq \epsilon, \quad  \partial F(\vu)\subseteq \partial F(\vx),
\]
and satisfies the one-sided Lipschitz condition, e.g.
\begin{equation}
 \langle g_x - g_y, \vx - \vy \rangle \leq L \Vert \vx - \vy \Vert^2,
\label{eq:onesidedL}
\end{equation}
for all $g_x\in -\partial F(\vx)$, $g_y\in -\partial F(\vy)$.
\end{lemma}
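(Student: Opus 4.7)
The plan is to reduce all three claims to the sum-rule decomposition
\[
\partial F(\vx) = \nabla f(\vx) + \partial g(\vx),
\]
which holds by the Clarke calculus because $f$ is $C^1$ (being $L$-smooth) and $g$ is Clarke regular (being convex), so $\partial g$ coincides with the classical convex subdifferential. All three items then follow from standard facts about convex subdifferentials together with Lipschitz continuity of $\nabla f$.

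For closedness and convexity, I would note that $\partial g(\vx)$ is an intersection of the closed half-spaces $\{\vs : g(\vy) \geq g(\vx) + \langle \vs,\vy-\vx\rangle\}$ over $\vy$, hence closed and convex. Translating by the fixed vector $\nabla f(\vx)$ and then negating preserves both properties, giving the first claim for $-\partial F(\vx)$.

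For upper hemicontinuity, I would interpret the displayed inclusion in the standard set-valued sense: for every open $V \supseteq \partial F(\vx)$ there is $\epsilon>0$ with $\partial F(\vu)\subseteq V$ whenever $\|\vu-\vx\|\leq \epsilon$. The key ingredient is graph closedness plus local boundedness of $\partial g$: if $\vu_n\to\vx$ and $\vh_n\in\partial g(\vu_n)$, local Lipschitzness of $g$ near $\vx$ bounds $\{\vh_n\}$, and passing to the limit in the subgradient inequality forces any accumulation point to lie in $\partial g(\vx)$. Combined with continuity of $\nabla f$, this gives upper hemicontinuity of $\partial F$.

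For the one-sided Lipschitz condition, write $-g_x = \nabla f(\vx) + \vs_x$ and $-g_y = \nabla f(\vy) + \vs_y$ with $\vs_x \in \partial g(\vx)$, $\vs_y \in \partial g(\vy)$. Then
\[
\langle g_x - g_y, \vx - \vy \rangle \;=\; -\langle \nabla f(\vx) - \nabla f(\vy), \vx - \vy \rangle \;-\; \langle \vs_x - \vs_y, \vx - \vy \rangle.
\]
The second term is $\leq 0$ by monotonicity of $\partial g$ (add the subgradient inequalities at $\vx$ and $\vy$), and the first is bounded by $L\|\vx-\vy\|^2$ via Cauchy--Schwarz and the $L$-Lipschitz property of $\nabla f$. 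The main obstacle is just being careful about the upper hemicontinuity statement: as literally displayed, $\partial F(\vu)\subseteq\partial F(\vx)$ requires $\nabla f$ to be locally constant at $\vx$, so I would either reinterpret the conclusion in the set-valued (neighborhood-of-the-image) sense or insert an $\epsilon$-ball buffer on the right. Everything else is routine convex nonsmooth calculus.
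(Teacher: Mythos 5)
Your argument for the one-sided Lipschitz condition is essentially the paper's: split the inner product into the smooth part, bounded by $L\|\vx-\vy\|^2$ via Lipschitz continuity of $\nabla f$ and Cauchy--Schwarz, and the convex part, which is nonpositive by monotonicity of the convex subdifferential. Where you diverge is in the first two claims: you route everything through the sum rule $\partial F(\vx) = \nabla f(\vx) + \partial g(\vx)$ and then invoke standard convex-analysis facts (intersection of closed half-spaces for closedness and convexity; graph closedness plus local boundedness of $\partial g$ and continuity of $\nabla f$ for upper hemicontinuity), whereas the paper works directly with the support-function definition \eqref{eq:clarke-def1}, asserting closedness and convexity ``by construction'' and deriving upper hemicontinuity from an upper-semicontinuity inequality for the generalized directional derivative $F^D$. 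Your route is more modular and makes explicit where each hypothesis is used; the paper's is shorter but leans entirely on the limsup manipulation. Your flag on the displayed inclusion $\partial F(\vu)\subseteq \partial F(\vx)$ is well taken and is the most valuable part of your write-up: as literally written the inclusion fails whenever $\nabla f$ is nonconstant near $\vx$ (take $g=0$, so both sides are distinct singletons), and the paper's limsup inequality only delivers the standard neighborhood form $\partial F(\vu)\subseteq \partial F(\vx)+\epsilon B$. Your proposed reinterpretation in the set-valued sense is the correct reading, and it is all that the subsequent existence theorem for the differential inclusion actually requires.
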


\begin{proof}
By construction, $\partial F(\vx)$ is always closed and convex. 
Note that $-\partial F(\vx)$ is upper hemicontinuous if $\partial F(\vx)$ is upper hemicontinuous. 
Then, since
\[
\lim_{\vu\to \vx} \limsup_{\vy\to \vu,\rho\to 0}   \frac{F(\vy+\rho \vv)-F(\vy)}{\rho} \leq  \limsup_{\vy\to \vx,\rho\to 0} \frac{F(\vy+\rho \vv)-F(\vy)}{\rho} 
\]
upper hemicontinuity of the Clarke subdifferential immediately follows.
Now we consider if \eqref{eq:prox_flow} is one-sided Lipschitz. Recall that since $f$ is $L$-smooth, it simply follows   that
\[
|\langle \nabla f(\vx)-\nabla f(\vy),\vx-\vy\rangle| =
|\langle g_x-g_y,\vx-\vy\rangle| \leq L \|\vy-\vx\|^2.
\]
Next, since $h$ is convex and $h_x,h_y$ is the subgradient for $h(\vx),h(\vy)$ respectively, 
\[
\langle h_{x}-h_{y}, \vx-\vy\rangle \geq 0 \iff 
\langle g_x-g_y, \vx-\vy\rangle \leq 0.
\]
Therefore, $F = f+h$ satisfies \eqref{eq:onesidedL}.
\end{proof}
\begin{theorem}
   For $F = f+h$ where $f$ is $L$-smooth and $h$ is convex, the proximal flow defined by \eqref{eq:prox_flow} always admits an existing and unique solution $\vx(t)$.
\end{theorem}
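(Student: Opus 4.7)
The plan is to recognize that the previous two lemmas have already done essentially all of the work: the first shows that any candidate trajectory $\vx(t)$ satisfying \eqref{eq:prox_flow} must satisfy the differential inclusion $\dot\vx(t)\in -\partial F(\vx(t))$, and the second shows that the set-valued map $\vx\mapsto -\partial F(\vx)$ is nonempty, closed- and convex-valued, upper hemicontinuous, and one-sided Lipschitz with constant $L$. So the remaining task is to invoke a standard existence-and-uniqueness result for differential inclusions whose right-hand sides enjoy exactly these properties.

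For existence, I would appeal to a classical selection theorem such as the Aubin--Cellina existence theorem: given a set-valued map with nonempty, closed, convex values that is upper hemicontinuous and locally bounded, the associated differential inclusion admits at least one absolutely continuous solution on some interval $[0,T)$ for any initial condition $\vx(0)=\vx_0$. Local boundedness of $-\partial F$ follows because $\nabla f$ is $L$-Lipschitz and $\partial h$ is locally bounded for $h$ convex. To extend the solution to $[0,\infty)$, I would note that the one-sided Lipschitz bound \eqref{eq:onesidedL} and Gr\"onwall's inequality applied to $\|\vx(t)\|^2$ prevent blow-up in finite time, so no maximal solution terminates prematurely.

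For uniqueness, the argument is short. Suppose $\vx(t)$ and $\vy(t)$ are two solutions starting from the same $\vx_0$, and set $D(t) = \tfrac{1}{2}\|\vx(t)-\vy(t)\|^2$. Differentiating and using $\dot\vx(t)\in -\partial F(\vx(t))$ and $\dot\vy(t)\in -\partial F(\vy(t))$ together with the one-sided Lipschitz bound \eqref{eq:onesidedL} gives
\[
\dot D(t) = \langle \vx(t)-\vy(t),\, \dot\vx(t)-\dot\vy(t)\rangle \leq 2L\, D(t).
\]
Since $D(0)=0$, Gr\"onwall's inequality forces $D(t)=0$ for all $t\geq 0$, i.e.\ $\vx(t)=\vy(t)$.

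The main obstacle, such as it is, is bookkeeping rather than a genuine mathematical difficulty: one has to confirm that the particular selection from $-\partial F(\vx)$ picked out by the proximal limit in \eqref{eq:prox_flow} is consistent with \emph{some} absolutely continuous solution of the differential inclusion, and that uniqueness among all measurable selections (which is what Gr\"onwall gives) is enough to pin down this specific trajectory. Since the uniqueness argument above applies to any pair of solutions regardless of which measurable selection they realize, this is immediate, and the theorem follows.
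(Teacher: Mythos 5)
Your proposal is correct and follows essentially the same route as the paper: both reduce the theorem to the two preceding lemmas (the proximal flow is a selection of the differential inclusion $\dot\vx\in-\partial F(\vx)$, and $-\partial F$ is closed- and convex-valued, upper hemicontinuous, and one-sided Lipschitz) and then invoke the standard existence and uniqueness theory for such inclusions. The only difference is that you spell out the details the paper leaves implicit — naming the Aubin--Cellina existence theorem, giving the Gr\"onwall argument for uniqueness explicitly, and noting the global-extension and selection-consistency points — which is a more complete write-up of the same argument rather than a different one.
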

\begin{proof}
  Since \eqref{eq:prox_flow} is an instance of subgradient flow \eqref{eq:subgradflow}, and since the Clarke subdifferential is  upper hemicontinuous, then \eqref{eq:prox_flow} always admits an existing solution.
  Additionally, $-\partial F(\vx)$ satisfies the one-sided Lipschitz condition, and therefore the solution is unique.
\end{proof}

\subsection{Optimality}
Next, we show that the stationary point of the proximal flow (e.g. $\lim_{t \to \infty }\vx(t)$) is equivalent to finding the stationary point of $F(\vx)$.

\begin{theorem}\label{the:equiv}
Consider the proximal flow in \eqref{eq:prox_flow} with $\vx(0) = \vx_0$.
Then a stationary point of the ODE   \eqref{eq:prox_flow} is a stationary  point of $F(\vx)$.
\end{theorem}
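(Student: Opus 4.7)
The plan is to use the characterization of the proximal flow's right-hand-side already established in Lemma \ref{lem:exist_unique}, namely that $\dot{\vx}(t)\in -\partial F(\vx(t))$ in the Clarke sense. A stationary point of the ODE is a point $\vx^\star$ at which the dynamics vanish, i.e.\ where
\[
0 = \lim_{\beta\to 0^+}\frac{\prox_{\beta F}(\vx^\star)-\vx^\star}{\beta}.
\]
By Lemma \ref{lem:exist_unique}, this right-hand-side is a member of $-\partial F(\vx^\star)$, and hence $0\in\partial F(\vx^\star)$, which is precisely the first-order (Clarke) stationarity condition for the minimization of $F$.

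The steps I would carry out, in order, are: first, spell out the definition of a stationary point of \eqref{eq:prox_flow} as a point $\vx^\star$ where the right-hand-side evaluates to zero (this is well-defined by Proposition \ref{prop:proxflow_exists}). Second, invoke Lemma \ref{lem:exist_unique} at $\vx^\star$ to conclude $0\in -\partial F(\vx^\star)$, which by convexity/symmetry of the subdifferential set around the origin is equivalent to $0\in\partial F(\vx^\star)$. Third, recall that $0\in \partial F(\vx^\star)$ is the standard definition of a stationary point of $F$ in the nonsmooth setting, so the conclusion follows.

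The only subtlety, and the place where care is needed, is in reconciling the two notions of "stationary point" under the ODE. If one interprets a stationary point via $\lim_{t\to\infty}\vx(t)$ rather than via $\dot{\vx}=0$, then one must argue that the limit $\vx^\star$ (when it exists) satisfies $\dot{\vx}|_{\vx^\star}=0$. This follows because the right-hand-side is upper hemicontinuous (established in the previous lemma) and $\vx(t)\to\vx^\star$, so any non-zero velocity at $\vx^\star$ would force the trajectory to continue moving and contradict convergence. Under either definition, the substantive content is entirely contained in Lemma \ref{lem:exist_unique}, and no additional analysis is required beyond invoking it.
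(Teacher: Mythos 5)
Your proof is correct and takes essentially the same route as the paper's: both reduce a stationary point of the flow to the condition $0\in\partial F(\vx^\star)$ via the inclusion $\dot\vx(t)\in-\partial F(\vx(t))$ (the paper re-derives this inline through the Moreau envelope gradient $\nabla F_\beta(\vx)\in\partial F(\prox_{\beta F}(\vx))$ and a limiting argument, whereas you simply cite Lemma \ref{lem:exist_unique}, which already contains that content). One cosmetic remark: the step $0\in-\partial F(\vx^\star)\iff 0\in\partial F(\vx^\star)$ needs no appeal to ``symmetry'' of the subdifferential set --- it holds trivially because $-0=0$.
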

\begin{proof}
Recall that if $\dot\vx(t) = 0$, then equivalently
\[
 \lim_{\beta \to 0} \frac{\prox_{\beta F}(\vx) - \vx}{\beta} = 0 
\]
which is equivalent to 
\[
 0 = \lim_{\beta \to 0} \nabla F_{\beta}(\vx) \in \lim_{\beta \to 0} \partial F(\prox_{\beta F}(\vx))  = \partial F(\vx).
\]
Hence $0$ is a subgradient of $\partial F(\vx)$, if and only if $\vx$ is a stationary point of $F$.
\end{proof}
%

\subsection{Examples}
To give a more intuitive understanding of the proximal flow, we give the several examples of $F(\vx)$ and demostrate how to calculate its corresponding proximal flow.

\paragraph{Example: Smooth minimization.} Suppose $F(x) = f(x)$ is differentiable everywhere. Then, 
\[
\lim_{\beta \to 0} \frac{\prox_{\beta f}(\vx) - \vx}{\beta} =  -\nabla f(\vx)
\]
and the proximal flow reduces to gradient flow \eqref{eq:gradient_flow}.

\paragraph{Example: Smooth + nonsmooth.} 
Now suppose that $f$ is not only smooth, but is second-order differentiable and
where $h$ is nonsmooth but convex. Then, in the limit of $\beta \to 0$, we may use the Taylor approximation for $\vu \approx \vx$
\[
f(\vu) \approx   \nabla f(\vx)^T(\vu-\vx) 
\]
and therefore, in this limit,
\[
 \prox_{\beta f}(\vx) 
 =\argmin{\vu} \; \beta \nabla f(\vx)^T(\vu-\vx) + \beta h(\vu) + \frac{1}{2}\|\vx-\vu\|_2^2
 = \prox_{\beta h}(\vx-\beta \nabla f(\vx)).
\]
In other words, in the limit of $\beta \to 0$, the proximal gradient method also reduces to the proximal point flow.

\paragraph{Example: Shrinkage.} Let us  consider $h(\vx) = \|\vx\|_1$. Then 
\[
 \prox_h (\vx)_i =  \begin{cases}
    x_i-\sign(x_i)\beta, & |x_i| > \beta,\\
    0 & \text{ else,}
\end{cases} 
\]
and therefore, in the limit $\beta\to 0$, elementwise,
\[
\dot x_i(t) =
  \begin{cases}
    -\sign(x_i), & |x_i| \neq 0,\\
    0 & \text{ else.}
\end{cases}
\]
Note that in this limit, $\dot x_i$ is not continuous (although $x_i$ is still continuous).
And, for all convex nonsmooth $h$,  by using the proximal operator and not the subgradient, we ensure that $\dot\vx(t)$ is always attaining a unique value.


 \paragraph{Example: LSP.} Let us now consider a nonconvex loss  $F(\vx) = \sum_i \log(1+\theta^{-1}|x_i|)$. 
 Note that if $n = 1$, then 
 \[
 F(\vx) = \underbrace{\log\left(1+\frac{|\vx|}{\theta}\right) - \frac{|\vx|}{\theta}}_{g(\vx)}+ \underbrace{\frac{|\vx|}{\theta}}_{h(\vx)}
 \]
where $g$ is $L=\theta^{-2}$ smooth and $h$ is convex. Therefore, \eqref{eq:prox_flow} is well defined for this choice of function.
 Additionally, this proximal flow  may also be computed element-wise:
  \begin{eqnarray*}
\prox_{\beta F} (\vx)_i &=& \argmin{u} ~  \log\left(1+\frac{|u|}{\theta}\right) + \frac{(u-x_i)^2 }{2\beta}\\
&=& 
\begin{cases}
    \frac{\sqrt{(x_i+\theta)^2 -4\theta \beta}-\theta+x_i}{2}\cdot \sign(x_i), & x_i-\theta\geq \sqrt{(x_i+\theta)^2-4\theta \beta},\\
    0 & \text{else.}
\end{cases}
\end{eqnarray*}
 For each $i$, if  $x_i\neq 0$,  then 
 \[
 \dot x_i = -\frac{\partial h(\vx)}{\partial x_i} = -\frac{\sign(x_i)}{b+|x_i|}.
 \]
 Now consider $x_i = 0$. Then 
 \[
\prox_{\beta F} (\vx)_i = \argmin{u} ~ \beta \log\left(1+\frac{|u|}{b}\right) + u^2 
 \]
 which is monotonically increasing in $u$
 for any value of $b > 0$, $\beta > 0$, and is minimized at  $u = 0$. Therefore,
\begin{equation}
\dot x_i =  \frac{\prox_{\beta F}(\vx)_i-\vx_i}{\beta} = \begin{cases}
   -\frac{\sign(x_i)}{b+|x_i|}, & x_i \neq 0,\\
    0 & \text{ else.}
\end{cases}
\label{eq:lspflow}
\end{equation}
Again, this is not a typical DE because $\dot x_i$ is not continuous in $t$; however, $x_i(t)$ \emph{is} continuous in $t$ (and exists and is unique everywhere).
 
\section{Implicit discretization of proximal flow}
\label{sec:implicit_discr}
\subsection{Proximal methods}

%
%
%

At this point, we have now observed three different possible discretizations of the right hand side of \eqref{eq:prox_flow}; that is, we may update as 
\[
\frac{\vx^{(k+1)}-\tilde \vx^{(k)}}{\beta} = \mathcal F(\tilde \vx^{(k)})
\]
where for $\tilde \vx^{(k)} = \vx^{(k)}$, and the choice of $\mathcal F(\vx)$ leads to three well-known methods
\begin{align*}
-\nabla F(\vx), && \text{(Gradient descent)}\\
\frac{\prox_{\beta h}(\vx - \beta \nabla f(\vx))-\vx}{\beta},&& \text{(Proximal gradient method)} \\
\frac{\prox_{\beta F}(\vx) - \vx}{\beta}, && \text{(Proximal point method).}
\end{align*}
We now explore higher order (multi-step) schemes by  simply by modifying $\tilde \vx^{(k)}$:
\begin{equation}
\tilde \vx^{(k)} = \sum_{i=1}^\tau \xi_i \vx^{(k-\tau+i)},
\label{eq:bdf-mixing}
\end{equation}
and at each iteration, computing
\begin{equation}
\vx^{(k+1)}  \approx \argmin{\vx} \; F(\vx) + \frac{1}{2\beta} \|\vx-\tilde {\vx}^{(k)}\|^2.
\label{eq:approximate-prox}
\end{equation}
Specifically, the proximal point method with this choice of $\xi$ specified in table \ref{tab:bdf-const}.
While there exists a variety of methods for producing such discretizations, we focus on the \emph{backwards differential formula}, in which the coefficients for $\xi_i$ are chosen so that the function approximates the slope of the  $\tau$-order Lagrange interpolating polynomial, which explicitly dictates the constants $\xi_i$ as presented in 
Table \ref{tab:bdf-const}.

\subsection{Truncation error and stability}

\paragraph{Truncation error.}
For a multi-step method, the \emph{local truncation error} of an iterative process that discretizes
$\dot{\mathbf{x}}(t) = -\mathcal{F}(\mathbf{x}(t))$,
is  defined as
\begin{equation}
\epsilon_k(\alpha) =  \frac{1}{\alpha} \left ( 
    \sum_{i=0}^{\tau} \xi_i \vx^{(k-\tau+i)} - \alpha  \mathcal F( \vx^{(k)})\right).
\end{equation}
If $\lim_{\alpha \to 0} \epsilon(\alpha) = 0$, the iterative update is considered consistent. Moreover, if the truncation error $\epsilon(\alpha)$ behaves as $\Theta(\alpha^{\tau})$, then the iterative update $\mathcal{A}$ is said to exhibit a truncation error of order $\tau$.
It is well-known \cite{ascher1998computer} that the truncation error of a $\tau$-step BDF method is $O(\alpha^\tau)$.
In other words, the higher order the method, the more likely the iterates $\vx^{(k)}$ hug the continuous trajectory $\vx(t)$.

\paragraph{Stability and step size.}
The region of stability in a discretization method gives the permitted step size ranges such that the method still converges. 
Determining the appropriate step size for a BDF based on the region of stability is less straightforward. On one hand, the entire negative real line is in the stability region for all BDF schemes, and indeed, in many of the problems we discuss, the gradient field is such that $\dot \vx(t) = A\vx(t)$ can be well-modeled by $A$ with purely real eigenvalues. However, because in practice our implicit steps are approximate, instabilities indeed arise when $\tau$ is high. 
We address this  case by case  in our convergence analysis.
\begin{table}[]
\small
    \centering
\begin{tabular}{|c|c|cccc|}
\hline&&&&&\\ [-2ex]
     &  $\bar \xi$ & $\xi_1$ & $\xi_2$ & $\xi_3$ & $\xi_4$  \\ \hline
    BDF1 &1 &  1&&& \\
    BDF2 & 2/3 & -1/3 &  4/3 &&\\
    BDF3 & 6/11 & 2/11 & -9/11 &18/11 &\\
    BDF4 & 12/25& -3/25 & 16/25 & -36/25 & 48/25\\\hline
\end{tabular}
    \caption{Summary of constants for BDF methods. }
    \label{tab:bdf-const}
\end{table}

\section{Quadratic convergence analyses}
\label{sec:quad_conv}
We now present the convergence analyses of the BDF method under  quadratic problems.

\subsection{Approximate proximal methods}
First, we establish our metric of approximation in computing the proximal point step, which is necessary for all remaining sections.
In general, a full proximal point step $\prox_{\beta F}$ is not much computationally cheaper than fully minimizing $F$; therefore, in practice, only approximation methods are reasonable to consider. 
\begin{definition}
Denote the exact solution to the proximal operation   as
\[
\vx_*^{(k+1)}:=  \argmin{\vx} \;  \underbrace{F(\vx) + \frac{1}{2\beta} \|\vx-\tilde {\vx}^{(k)}\|_2^2}_{=:\tilde F_\beta(\vx) }.
\]
    We say that an approximate solution $\vx^{(k +1)}$  is \emph{$\gamma$-contractive} if it satisfies 
\begin{equation}
    \Vert \vx^{(k +1)} - \vx_*^{(k+1)} \Vert_2  \leq \gamma     \Vert \sum_{i=1}^{\tau}\xi_i \vx^{(k-\tau +i)} - \vx_*^{(k+1)} \Vert_2\label{eq:approx_contract_assp}.
\end{equation}
\end{definition}
There are many ways of achieving a $\gamma$-contractive solution, such as using $m$ iterations of a gradient or proximal gradient method, for small $m$.

\paragraph{Example.}
Suppose $F= f +h$ and $f$ is $L$-smooth and $\prox_{h}$ is easy to compute.  Then computing $m$ steps of proximal gradient descent with step size $\alpha = \beta /(\beta L+1)$ over $\tilde{F}_\beta(\vx)$ 
\[
\vx^{(k,i+1)} = \prox_{\beta h}\left(\vx^{(k,i)} - \alpha \nabla F(\vx^{(k,i)})-\frac{\alpha}{\beta}(\vx^{(k,i)}-\tilde\vx^{(k)})\right), 
\]
with
\[
\vx^{(k,0)} = \vx_{*}^{(k)}, \quad \vx^{(k+1)} = \vx^{(k,m)}
\]
satisfies \eqref{eq:approx_contract_assp} with $\gamma = (1-\frac{1}{(L\beta+1)})^m$.


\subsection{Convergence over quadratic functions}
We now consider the error analysis of   BDF over quadratic functions
\[
f(\vx) = \frac{1}{2}\vx^T Q \vx , \qquad h(\vx) = 0.
\]
Here $Q$ is a positive definite matrix with eigenvalues bounded $\mu \leq \lambda_i \leq L$.
We define the radius of convergence of a sequence $\vx^{(k)}$ as $\rho$ where $0\leq \rho\leq 1$ and
\begin{equation}
\|\vx^{(k)}-\vx^*\|\leq \rho^k\|\vx^{(0)}-\vx^*\|.
\label{eq:conv-linear-radius}
\end{equation}
Then, since $\tilde F_\beta(\vx)$ is $\mu=\beta^{-1}$ strongly convex and $L+\beta^{-1}$ smooth, then using an optimal step size of $\frac{\beta}{2+\beta L}$ yields 
\[
 \Vert \vx^{(k +1)} - \vx_*^{(k+1)} \Vert  \leq \underbrace{\left(1-\frac{1}{\beta L  +1}\right)^m}_{\gamma} \Vert \sum_{i=1}^{\tau}\xi_i \vx^{(k-\tau +i)} - \vx_*^{(k+1)} \Vert.
 \]
Then, the algorithm dictated by \eqref{eq:bdf-mixing} and \eqref{eq:approximate-prox} can be written precisely as  
\[
\vx^{(k+\tau + 1)} = A^{m} \vx^{(k+\tau)}  + \frac{\alpha}{\beta}\sum_{j=1}^m A^{j-1} \sum_{i=1}^\tau \xi_i \vx^{(k+i)}, 
\qquad
A =\left(1-\frac{\alpha}{\beta}\right) I-\alpha Q.
\]
Defining also 
$B:=\frac{\beta}{\alpha} \sum_{j=1}^m A^{j-1}$
we may construct a first-order system
\[
\underbrace{\begin{bmatrix}
\vx^{(k+\tau+1)}\\
\vx^{(k+\tau)}\\
\vx^{(k+\tau-1)}\\
\vdots\\
\vx^{(k+2)}\\
\end{bmatrix}}_{\vz^{(k+1)}}
 \hspace{-1ex} = \hspace{-1ex}
 \setlength{\arraycolsep}{1mm}
\underbrace{\begin{bmatrix}
A^m +   \xi_{\tau}B & \xi_{\tau-1} B& \cdots &   \xi_1B\\
I&0&\cdots & 0  \\
0&I&\cdots & 0 \\
\vdots & \vdots & \ddots & \vdots  \\
0&0&\cdots  & 0 \\
\end{bmatrix}}_{=: M}
\hspace{-1ex}
\underbrace{\begin{bmatrix}
\vx^{(k+\tau)}\\
\vx^{(k+\tau-1)}\\
\vx^{(k+\tau-2)}\\
\vdots\\
\vx^{(k+1)}\\
\end{bmatrix}}_{\vz^{(k)}}
\]
and defining $\vz^* = \textbf{vec} (\vx^*,\vx^*,...,\vx^*)$
we see that it is the eigenvalues of $M$ that define the method radius of convergence
\[
(\vz^{(k)}-\vz^*) = M^k (\vz^{(0)}-\vz^*) .
\]
Unfortunately, $\|M\|_2$ is bounded below by 1, so we invoke Gelfand's rule \citep{gelfand1941normierte} to produce an asymptotic bound
\begin{equation}
\|\vz^{(k)}-\vz^*\| \lesssim (\mathrm{Re}(\lambda_{\max}(M)))^k\|\vz^{(0)}-\vz^*\|.
\label{eq:contract-bdf}
\end{equation}
That is, although $M$ is not symmetric, \eqref{eq:contract-bdf} holds asymptotically.

In Tables \ref{tab:quad_sensitivity} and \ref{tab:quad_optimal}, we give the limits on step size $\alpha$ and on optimal $\rho$ by finding values in which $\lambda_{\max}(M) < 1$, guaranteeing stability. Alongside, Figure \ref{fig:multistep_poly_stability} shows $\rho$ over a continuation of $\beta$, over changing $m$ and $\alpha$. There is a clear correlation with greater stability and faster convergence with increased $\beta$ (implicit step size), as expected. The reliance on $m$ is interestingly unexpected; more $m$ leads to better solving of the inner prox step, but gives little effect on stability and can even give a negative effect on convergence. This suggests that though implicit methods are thought of as too expensive, very approximate versions are not only practical, they are very close to optimal. 

\begin{table}[ht]

\begin{subtable}[t]{.45\textwidth}
    \centering
    \begin{tabular}[t]{|l|cc|}
    \hline
&$L = 2$&$L = 10$\\\hline
     \PPM (1)&0.667 & 0.182  \\
     \PPM (10)&0.952 & 0.198 \\
     BDF2 (1)&0.665 & 0.181   \\
     BDF2 (10)& 0.940 & 0.197 \\
     BDF3 (1)&0.608 & 0.178  \\
     BDF3 (10)&0.940 & 0.197 \\
     \hline
\end{tabular}
    \caption{\textbf{Sensitivity of parameters.} Upper bound on best step size to avoid divergence $\alpha$ for PPM and BDFs).  \PPM$(\beta)$ means proximal point, with step size $\beta$. (Bigger is better.) Here, we pick $m = 4$, and notice very little deviation for $m > 4$. Here, $\mu = 1$.}
    \label{tab:quad_sensitivity}
\end{subtable}'
\hfill
\begin{subtable}[t]{.45\textwidth}
\begin{tabular}[t]{|l|cc|}
    \hline
&$L = 2$&$L = 10$\\\hline
     \PPM (4,1)& 0.500 & 0.596 \\
     \PPM (20,1)& 0.500 & 0.500 \\
     \PPM (4, 10)& 0.0935 & 0.466 \\
     \PPM (20,10) & 0.0909 & 0.100 \\
     BDF2 (4,1)&0.326 & 0.282    \\
     BDF2 (20,1)&0.303 & 0.211    \\
     BDF2 (4,10)&0.059 & 0.423  \\
     BDF2 (20,10)&0.024 & 0.024  \\
     BDF3 (4,1)& 0.377 & 0.451 \\
     BDF3 (20,1)&0.377 & 0.306  \\
     BDF3 (4,10)&0.197 & 0.459   \\
     BDF3 (20,10)&0.197 & 0.165   \\\hline
\end{tabular}
    \caption{\textbf{Optimal $\rho$.} Minimum $\rho$ given best best step size, $\alpha$ for PPM and BDFs). PPM$(m,\beta)$ means proximal point, with step size $\beta$ over $m$ iterations. (Smaller is better.) Here, $\mu = 1$.}
    \label{tab:quad_optimal}
\end{subtable}
\end{table}

\begin{table}[ht]
    \centering
    
\end{table}
\begin{figure*}
    \centering
    \includegraphics[width=0.8\textwidth]{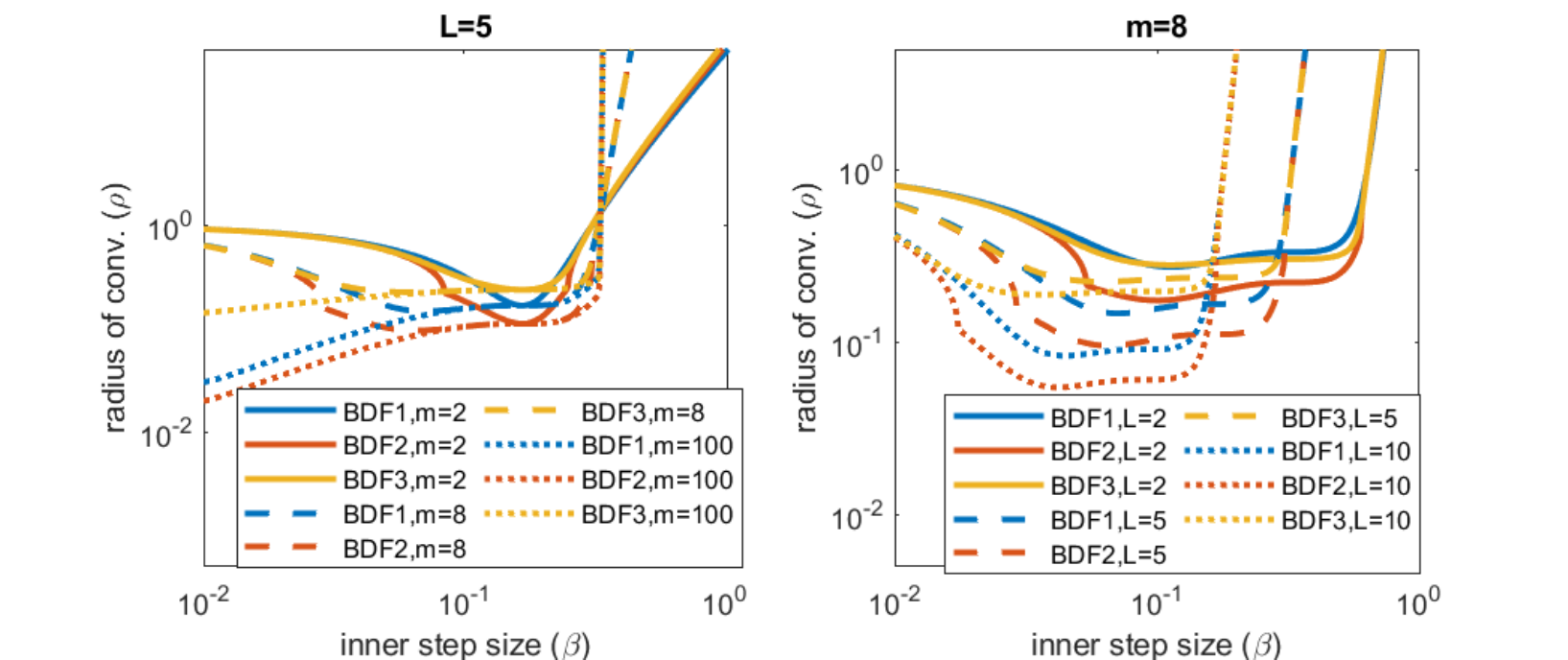}
    \caption{\textbf{Multistep radius of convergence.} $\lambda_{\max}(M)$ over changing values of $\beta$, for $\alpha = 1$ and varying $L = \lambda_{\max}(Q)$ and $m$ = number of inner gradient steps.}
    \label{fig:multistep_poly_stability}
\end{figure*}


\section{Convergence over general functions}
\label{sec:conv_general}
We now consider the convergence behavior of the multistep method over general functions $F$ which are $\mu$-convex (where $\mu < 0$ is possible) and (when needed) $L$-smooth.  
We show that for \emph{general $\xi_i$} and a wide range of functions, converges at similar convergence trends as single-step (approximate) proximal methods. 
A summary of the results in this section is given in Table \ref{tab:convergence_results}.

\begin{table}[htbp]
\renewcommand{\arraystretch}{1.5} 
\centering
\begin{tabular}
{|l|l|c|c|}
\hline
&& \textbf{Exact} & \textbf{Inexact} \\
\hline
\multirow{3}{*}
{\parbox{1.6cm}{s. convex \\ ($\mu\geq 0$)}}
& cnvg.&  $\|\vx-\vx^*\|=O((\frac{1 }{1+\beta \mu})^{k}) $ & $  \|\vx-\vx^*\|=  O((\gamma +\frac{(1+\gamma) }{1+\beta \mu} )^k )$ \\
&$\beta$ req. & none  & $\beta \geq \beta_{\min}$  \\
&Thm.  & \ref{th:stronglyconvex} & \ref{th:approx-strongly-convex}  \\
\hline
\multirow{3}{*}
{\parbox{1.6cm}{w. convex \\ ($\mu\leq 0$)}} &cnvg. & $\epsilon_\beta(\vx) = O(\sqrt{(1+\delta)/k})$
& $\|\nabla F(\vx)\|_2 = O(\sqrt{(1+\delta)/k})$
\\
&$\beta$ req. & $\beta < \frac{1-(\tau-1)\delta }{L}$ & $\beta < \frac{1-(\tau-1)\delta }{L}$ \\
&Thm.  & \ref{th:Non_convex_conv} & \ref{th:Nonconvex_approx_conv} \\
\hline
\end{tabular}
\caption{\textbf{Summary of convergence rates.}   Here, $\beta$ is the step size of the proximal point operator, $\tau$ is the multistep order, $\xi_i$ are the multistep coefficients, $\mu$ is the  convexity parameter, $L$ is the  smoothness parameter, $\gamma$ indicates prox inexactness,  $\delta$ is added error from nonconvexity, and $k$ is the iteration counter.
$\epsilon_\beta(\vx) = \Vert \frac{\prox_{\beta F}(\mathbf{x}) - \mathbf{x}}{\beta} \Vert$.
If $\beta$ req. is none, this means any $\beta \geq 0$ step size achieves the given convergence rate. Convergence rates are all given in Big-O. s. convex = strongly convex,  w. convex = weakly convex. cnvg = convergence rate. }
\label{tab:convergence_results}
\end{table}


We begin with the following Lemma which allows us to extend many of the proof techniques from the analyses of 
single-step proximal methods, to multi-step proximal methods.
\begin{lemma}\label{lem:sublinearseq}
Let $\{ a_k \}_{k=0}^{\infty}$ be a $\tau$-step sublinear sequence:
\begin{equation}
    a_k \leq \sum_{i=1}^{\tau} \xi_i a_{k-i}, 
    \label{eq:assp:sublinear}
\end{equation}
where   $k \geq \tau$. Then there exists $k >i_1 > i_2 > \cdots  i_{[\frac{k}{\tau}]}$ such that 
\begin{equation}
    a_k \leq  \left ( \sum_{i=1}^{k}|\xi_i| \right)  a_{i_1} \leq \left ( \sum_{i=1}^{k}|\xi_i| \right)^2  a_{i_2} \leq 
     \cdots\leq  \left ( \sum_{i=1}^{k}|\xi_i |\right)^{\lceil\frac{k}{\tau}\rceil}  a_{i_{\lceil \frac{k}{\tau}\rceil }}
     \label{eq:lem:sublinear}
\end{equation}
for all $k \in \{1,2,\cdots,\lceil\frac{k}{\tau}\rceil \}$.
\end{lemma}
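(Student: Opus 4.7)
The plan is to iterate a simple max-bound on the recurrence. Since $a_k$ is meant to represent an error-like nonnegative quantity (as it will in all applications of this lemma), I will treat $a_k \geq 0$ as implicit. The first step is to apply the triangle inequality to \eqref{eq:assp:sublinear}:
\[
a_k \;\leq\; \sum_{i=1}^{\tau} \xi_i a_{k-i} \;\leq\; \sum_{i=1}^{\tau} |\xi_i|\, a_{k-i} \;\leq\; C \cdot \max_{1\leq i \leq \tau} a_{k-i}, \qquad C := \sum_{i=1}^{\tau} |\xi_i|.
\]
(I would also flag that the sum $\sum_{i=1}^{k}|\xi_i|$ appearing in the statement should read $\sum_{i=1}^{\tau}|\xi_i|$, since $\xi_i$ is only defined for $i\leq \tau$.) Choosing $i_1 \in \{k-\tau,\ldots,k-1\}$ as the argmax then yields the first inequality in \eqref{eq:lem:sublinear}.

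Next, I would apply the same bound recursively to $a_{i_1}$, provided $i_1 \geq \tau$ so that \eqref{eq:assp:sublinear} still applies at index $i_1$. This gives $i_2 \in \{i_1-\tau,\ldots,i_1-1\}$ with $a_{i_1}\leq C\,a_{i_2}$, and inductively an index $i_j$ with $k > i_1 > i_2 > \cdots > i_j$ and $a_k \leq C^j a_{i_j}$. Each step reduces the index by at least $1$ and at most $\tau$, so $k - j\tau \leq i_j \leq k - j$. The recursion may be continued as long as $i_{j-1}\geq \tau$; the bound $i_{j-1}\geq k-(j-1)\tau$ then guarantees this up to $j = \lceil k/\tau\rceil$, producing the full chain in \eqref{eq:lem:sublinear}.

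The main obstacle I foresee is cosmetic rather than substantive: the off-by-one bookkeeping at the last step of the iteration. When $i_{\lceil k/\tau\rceil - 1}$ sits near $\tau$, the final application of the recurrence can land at an index slightly below $\tau$, where \eqref{eq:assp:sublinear} is not assumed. This is handled either by stopping the iteration one step earlier and absorbing the last inequality trivially, or by interpreting $a_{i_j}$ for $i_j < \tau$ as a free initial value of the sequence. Once this boundary case is dispatched, the lemma follows by a clean induction on $j$, with the whole argument resting on nothing more than the triangle inequality together with the observation that replacing a convex-like combination by its maximum loses only a factor of $C$.
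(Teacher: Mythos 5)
Your proposal is correct and follows essentially the same route as the paper's proof: the paper's invocation of H\"older's inequality with $p=1,q=\infty$ is exactly your ``triangle inequality plus max'' bound with constant $C=\sum_{i=1}^{\tau}|\xi_i|$, followed by the same choice of $i_j$ as an argmax and the same index-counting argument $i_j \geq k - j\tau$ to justify iterating $\lceil k/\tau\rceil$ times. Your flags about the typo ($\sum_{i=1}^{k}|\xi_i|$ should be $\sum_{i=1}^{\tau}|\xi_i|$), the implicit nonnegativity of $a_k$, and the boundary bookkeeping at the final step are all apt and consistent with how the paper's proof (and its accompanying remark about the initial index) handles them.
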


\begin{proof}[Proof of Lemma \ref{lem:sublinearseq}]
We prove this result by mathematical induction. When $k = \tau$, by using Holder's inequality for $p=1,q=\infty$, we will have:
\begin{align*}
    a_{\tau} \leq  \sum_{i=1}^{\tau} \xi_i a_{k-i} \leq \left (\sum_{i=1}^{\tau}|\xi_i| \right) \left (\max_{i\in \{1,2,\cdots,\tau \}} a_{\tau-i} \right ).
\end{align*} In this first step, we pick $i_1 = \argmax{i\in \{1,2,\cdots,\tau \}} a_{\tau-i}$. This gives
\[
a_k \leq  \left ( \sum_{i=1}^{\tau}|\xi_i| \right)  a_{i_1}, \qquad i_1 = \argmax{i\in \{1,2,\cdots,\tau \}} a_{\tau-i}.
\]
Now applying \eqref{eq:assp:sublinear} 
 recursively, we get
\[
a_{i_{n}} \leq  \left ( \sum_{i=1}^{\tau}|\xi_i| \right)  a_{i_{n+1}},\qquad i_{n+1} = i_k-j, \qquad j =  \argmax{i\in 1,2,...,\tau}\; a_{i_n-i}.
\]
This inequality can be chained for as long as there exists $i_n \geq 0$. Note that at each step, the smallest value $i_n$ can take is $k - \tau n$. So, the largest value of $k$ such that $i_n \geq 0$ is guaranteed is $n = \lceil\frac{k}{\tau}\rceil$.
\end{proof}

\begin{remark}
Lemma \ref{lem:sublinearseq} holds for some initial $i_{[\frac{n}{\tau}]} \in \{0,1,2,\cdots,\tau-1\}$.
\end{remark}

\subsection{Exact proximal point steps}
We first consider the case of exact proximal point iterations. That is, we minimize $F(\vx)$ via the sequence

\begin{equation}
  \vx^{(k + 1)} =  \prox_{\beta F }(\tilde{\vx}^{(k)}), \qquad 
    \tilde{\vx}^{(k)} =  \sum_{i=1}^{\tau} \xi_i \vx^{(k-\tau+i)}.
     \label{eq:bdf-proxpt}
\end{equation}
\begin{theorem} \label{th:mu_strong_conv}
Let $F(\vx)$ be a $\mu$-strongly convex function. We minimize $F(\vx)$ via  \eqref{eq:bdf-proxpt} where  $\sum_{i=1}^\tau \xi_i = 1$ and   $\xi_i \geq 0$.
Then $\vx^{(k)}$ converges to $\vx^*$ linearly:
\[
\|\vx^{(k)}-\vx^*\|\leq \left (\frac{1}{1+\beta \mu} \right )^{\lfloor\frac{k}{\tau}\rfloor} \;\cdot\;  \max_{j \in \{ 0,1,\cdots \tau-1 \}} \Vert \vx^{(j)} - \vx^* \Vert.
\]
\label{th:stronglyconvex}
\end{theorem}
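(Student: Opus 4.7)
The plan is to combine two standard ingredients---the strict contractivity of $\prox_{\beta F}$ on a strongly convex function, and the convex-combination structure of $\tilde\vx^{(k)}$---and then iterate the resulting scalar recursion across a block of $\tau$ iterations to extract one contraction factor per block.

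First, since $F$ is $\mu$-strongly convex, $\vx^*$ is the unique fixed point of $\prox_{\beta F}$, and the standard calculation combining the first-order optimality condition $\tilde\vx^{(k)}-\vx^{(k+1)}\in \beta\partial F(\vx^{(k+1)})$ with the $\mu$-strong monotonicity of $\partial F$ yields
\[
\|\prox_{\beta F}(\vy)-\vx^*\|\leq \frac{1}{1+\beta\mu}\|\vy-\vx^*\|.
\]
Applying this at $\vy=\tilde\vx^{(k)}$, using $\sum_{i=1}^\tau\xi_i=1$ to rewrite $\vx^*=\sum_i\xi_i\vx^*$, and then invoking $\xi_i\geq 0$ together with the triangle inequality gives, with $c:=1/(1+\beta\mu)$ and $a_k:=\|\vx^{(k)}-\vx^*\|$, the scalar recursion
\[
a_{k+1}\leq c\sum_{i=1}^\tau \xi_i a_{k-\tau+i}\leq c\max_{k-\tau+1\leq i\leq k}a_i,\qquad k\geq \tau-1.
\]

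Second, I would prove by strong induction on $k$ that $a_k\leq c^{\lfloor k/\tau\rfloor}M$, where $M:=\max_{0\leq j\leq\tau-1}a_j$. The base case $k<\tau$ is immediate from the definition of $M$. For $k\geq\tau$, apply the recursion above and the inductive hypothesis to each $a_i$ in the window $[k-\tau,k-1]$; since the upper bound $c^{\lfloor i/\tau\rfloor}M$ is non-increasing in $i$ (as $c\leq 1$ and $\lfloor i/\tau\rfloor$ is non-decreasing), its maximum over the window is attained at $i=k-\tau$, where $\lfloor(k-\tau)/\tau\rfloor=\lfloor k/\tau\rfloor-1$, and multiplication by the leading $c$ delivers $a_k\leq c^{\lfloor k/\tau\rfloor}M$ exactly.

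The argument is essentially careful bookkeeping and I do not anticipate a substantive obstacle; the hardest part is just keeping the indices and the floor functions consistent. One point worth flagging is that the hypothesis $\xi_i\geq 0$ is essential: without it, $\sum_i|\xi_i|$ can exceed $1$ and the convex-combination step may amplify past errors by more than the single-step prox contraction can absorb. This is presumably why the statement is restricted to nonnegative coefficients and does not directly cover the BDF schemes in Table~\ref{tab:bdf-const} whose coefficients include negative entries, necessitating the separate analyses that follow.
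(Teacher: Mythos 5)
Your proof is correct and follows essentially the same route as the paper's: the key inequality $\|\vx^{(k+1)}-\vx^*\|\leq\frac{1}{1+\beta\mu}\sum_{i=1}^{\tau}\xi_i\|\vx^{(k-\tau+i)}-\vx^*\|$ is exactly what the paper derives (by expanding $\|\tilde\vx^{(k)}-\vx^*\|^2$ and using strong convexity of $F$), and your strong induction on the scalar recursion plays precisely the role of the paper's Lemma~\ref{lem:sublinearseq} in converting the $\tau$-step recursion into one contraction factor per block. Your closing remark on why $\xi_i\geq 0$ is needed is also consistent with the paper, which handles signed coefficients separately in Theorem~\ref{th:stronglyconvex-anyxi} by requiring a larger $\beta$.
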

\begin{proof}
First, note that if $F$ is $\mu$-strongly convex, then  for $\zeta\in \partial F(\vx^{(k)})$,
  \begin{equation}
  \mu \|\vx^{(k)} - \vx^*\| \leq \|\zeta\| 
  \label{eq:mustrongconvex-1}
  \end{equation}
  and 
  \begin{equation}
0 \geq F(\vx^*)  -  F(\vx^{(k)})  \geq \langle \zeta, \vx^* -  \vx^{(k)} \rangle +  \frac{\mu}{2} \Vert \vx^{(k)} - \vx^* \Vert^2.
\label{eq:mustrongconvex-2}
\end{equation}
Now, the BDF step  \eqref{eq:bdf-proxpt} can be written as
\[
  \vx^{(k + 1)} =  \prox_{\beta F }(\tilde{\vx}^{(k)})=
     \sum_{i=1}^{\tau} \xi_i \vx^{(k+i-\tau)} - \beta \zeta_{k + 1}
\]
where $\zeta_{k + 1}\in \partial F(\vx^{(k+1)}) $. Rearranging, we see that  
\begin{eqnarray*}
 \Vert \sum_{i=1}^{\tau}\xi_i (\vx^{(k+i-\tau)}-\vx^*)\Vert ^2 
& =&\Vert \vx^{(k+1)} - \vx^* \Vert^2 + \beta^2 \Vert  \zeta_{k + 1} \Vert^2 
  + 2\beta \langle \zeta_{k + 1}, \vx^{(k+1)} - \vx^*\rangle \\
  &\overset{\eqref{eq:mustrongconvex-1}}{\geq} &\Vert \vx^{(k+1)} - \vx^* \Vert^2 +\beta^2 \mu^2 \Vert \vx^{(k+1)} - \vx^*\Vert^2  \\
  &&\qquad +2\beta \langle \zeta_{k + 1}, \vx^{(k+1)} - \vx^*\rangle \\
& \overset{\eqref{eq:mustrongconvex-2}}{\geq}&
  (1+2\beta \mu+\beta^2 \mu^2) \Vert \vx^{(k+1)} - \vx^* \Vert^2.
\end{eqnarray*}

As a result, by convexity of the $\ell_2$ norm,
\begin{eqnarray}
\Vert \vx^{(k+1)} - \vx^* \Vert 
&\leq& \frac{1}{1+\beta \mu}  \sum_{i=1}^{\tau}\xi_i \Vert \vx^{(k+i-\tau)}-\vx^*\Vert.
\label{eq:strongconvex_1stepcontract}
\end{eqnarray}
Now by Lemma \ref{lem:nonexpensive}, we will have $i_1,i_2,\cdots,i_{\lceil\frac{k+1}{\tau}\rceil}$ such that 
\begin{align*}
    \Vert \vx^{(k+1)} - \vx^* \Vert &\leq  \left (\frac{1}{1+\beta \mu} \right )  \Vert \vx^{(i_1)} - \vx^* \Vert
     \leq \left (\frac{1}{1+\beta \mu} \right )^{\lceil\frac{k+1}{\tau}\rceil}\hspace{-1ex}  \max_{j \in \{ 0,1,\cdots \tau-1 \}} \Vert \vx^{(j)} - \vx^* \Vert \label{eq11}
\end{align*}
for all $k \in \{1,2,\cdots,\lceil\frac{n}{\tau}\rceil \}$.
\end{proof}
 
Although Thm. \ref{th:stronglyconvex} only shows the case that $\xi_i\geq 0$
we can still have
linear convergence 
when $\xi_i<0$
by setting $\beta$ large enough.

\begin{theorem}

Let $F(\vx)$ be a $\mu$-strongly convex function. We minimize $F(\vx)$ via a $\tau$-multistep proximal point  method \eqref{eq:bdf-proxpt} where  $\sum_{i=1}^\tau \xi_i = 1$.
 Then it will converge linearly
\[
\|\vx^{(k+1)}-\vx^*\|\leq \left (\frac{\sum_{i=1}^{\tau} \vert \xi_i \vert }{1+\beta \mu} \right )^{[\frac{k+1}{\tau}]}  \max_{j \in \{ 0,1,\cdots \tau-1 \}} \Vert \vx^{(j)} - \vx^* \Vert
\]
for 
$\beta \geq  \frac{\sum_{i=1}^{\tau} \vert \xi_i \vert-1}{\mu}$.

\label{th:stronglyconvex-anyxi}
\end{theorem}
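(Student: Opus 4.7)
The plan is to mirror the proof of Theorem~\ref{th:stronglyconvex} and replace the single step that actually used $\xi_i \geq 0$ (Jensen/convexity of the norm) with the triangle inequality, at the cost of introducing $|\xi_i|$'s and a corresponding lower bound on $\beta$.

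First, I would rerun the opening of the proof of Theorem~\ref{th:stronglyconvex} verbatim. Writing $\vx^{(k+1)} = \sum_{i=1}^\tau \xi_i\vx^{(k+i-\tau)} - \beta\zeta_{k+1}$ with $\zeta_{k+1}\in\partial F(\vx^{(k+1)})$ and expanding the squared norm of the left-hand side, the bounds \eqref{eq:mustrongconvex-1}--\eqref{eq:mustrongconvex-2} (which use only $\mu$-strong convexity, not the signs of $\xi_i$) give the key inequality
\[
\Vert \textstyle\sum_{i=1}^{\tau}\xi_i(\vx^{(k+i-\tau)}-\vx^*)\Vert \;\geq\; (1+\beta\mu)\,\Vert \vx^{(k+1)}-\vx^*\Vert.
\]
This is the one and only place in Theorem~\ref{th:stronglyconvex} where the inner-product structure interacts with the signs of the coefficients, and it is sign-agnostic, so nothing needs to change here.

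Next, in place of the convexity-of-norm bound $\Vert\sum \xi_i(\vx^{(k+i-\tau)}-\vx^*)\Vert \leq \sum \xi_i\Vert \vx^{(k+i-\tau)}-\vx^*\Vert$ (which requires $\xi_i\geq 0$ and $\sum\xi_i=1$), I would apply the triangle inequality to obtain
\[
\Vert \textstyle\sum_{i=1}^{\tau}\xi_i(\vx^{(k+i-\tau)}-\vx^*)\Vert \;\leq\; \sum_{i=1}^\tau |\xi_i|\,\Vert \vx^{(k+i-\tau)}-\vx^*\Vert.
\]
Chaining with the previous display yields the $\tau$-step sublinear recursion
\[
\Vert \vx^{(k+1)}-\vx^*\Vert \;\leq\; \sum_{i=1}^\tau \frac{|\xi_i|}{1+\beta\mu}\,\Vert \vx^{(k+i-\tau)}-\vx^*\Vert,
\]
which has exactly the form \eqref{eq:assp:sublinear} required by Lemma~\ref{lem:sublinearseq}, with nonnegative coefficients $|\xi_i|/(1+\beta\mu)$.

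Applying Lemma~\ref{lem:sublinearseq} to the sequence $a_k = \Vert \vx^{(k)}-\vx^*\Vert$ then unrolls the recursion across blocks of $\tau$ steps and delivers
\[
\Vert \vx^{(k+1)}-\vx^*\Vert \;\leq\; \Bigl(\tfrac{\sum_{i=1}^\tau |\xi_i|}{1+\beta\mu}\Bigr)^{\lfloor (k+1)/\tau\rfloor} \max_{j\in\{0,1,\ldots,\tau-1\}} \Vert \vx^{(j)}-\vx^*\Vert,
\]
which is the stated rate. Finally, the hypothesis $\beta \geq (\sum_i|\xi_i|-1)/\mu$ is equivalent to $\sum_i|\xi_i| \leq 1+\beta\mu$, i.e.\ to the contraction factor being $\leq 1$; this is what upgrades the formula from a vacuous bound to genuine linear convergence.

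I do not anticipate a real obstacle: the only substantive change from Theorem~\ref{th:stronglyconvex} is swapping Jensen for the triangle inequality and carrying $|\xi_i|$ in place of $\xi_i$, and Lemma~\ref{lem:sublinearseq} is already phrased in terms of $|\xi_i|$ so it absorbs the sign change without modification. The only subtlety worth double-checking is that the condition $\sum_i\xi_i = 1$ is no longer used directly in the contraction step (it was used implicitly in Theorem~\ref{th:stronglyconvex} via $\sum\xi_i=1$ making the right-hand side a convex combination); here its role is subsumed by the quantitative lower bound on $\beta$.
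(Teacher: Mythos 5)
Your proposal is correct and follows essentially the same route as the paper: reuse the sign-agnostic inequality $\Vert\sum_i\xi_i(\vx^{(k+i-\tau)}-\vx^*)\Vert\geq(1+\beta\mu)\Vert\vx^{(k+1)}-\vx^*\Vert$ from Theorem~\ref{th:mu_strong_conv}, replace the convexity-of-norm step by the triangle/H\"older bound with $\sum_i|\xi_i|$, and unroll via Lemma~\ref{lem:sublinearseq}, with the condition on $\beta$ making the per-block factor at most one. The paper's proof is exactly this (it cites \eqref{eq:strongconvex_1stepcontract} and Lemma~\ref{lem:sublinearseq} directly), so no further comparison is needed.
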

\begin{proof}
Similar to \eqref{eq:strongconvex_1stepcontract}, we will have:
\begin{multline*}
\Vert \vx^{(k+1)} - \vx^* \Vert \leq \frac{1}{1+\beta \mu} \Vert \sum_{i=1}^{\tau}\xi_i (\vx^{(k+i-\tau)}-\vx^*)\Vert   \\
\leq  \frac{\sum_{i=1}^{\tau} \vert \xi_i \vert}{1+\beta \mu} \Vert  (\vx^{(i_1)}-\vx^*)\Vert   
\leq 
\left ( \frac{\sum_{i=1}^{\tau} \vert \xi_i \vert}{1+\beta \mu}   \right )^{\lceil\frac{k+1}{\tau}\rceil}  \max_{j \in \{ 0,1,\cdots \tau-1 \}} \Vert \vx^{(j)} - \vx^* \Vert
\end{multline*}
 where the index $i_1$ is as defined in Lemma \ref{lem:sublinearseq}.
 
\end{proof}

 When $\tau = 1$, Theorem \ref{th:mu_strong_conv} reduces to the well-known convergence rate of the proximal point method for $\mu$-strongly convex functions. 
For larger $\tau$, the overhead is a factor of $\tau$ in the number of iterates needed; however, for good choices of $\xi_i$ this is never observed.

The proof of convergence for strongly convex functions is consistent with our expectation, and matches our quadratic analysis as well. However, in general, we cannot expect $F$ to be strongly convex, or even at times convex. Thus, we now consider \emph{nonconvex $F$} through the condition of weak convexity.
First, we confirm that even if $F$ is not convex, the mapping $\prox_{\beta F}$ is still contractive, as long as $\beta$ is suitably small.  This is intuitive, since as $\beta \to 0$, the proximal regularization term will overwhelm the nonconvexity in $F$. 
\begin{lemma} [\cite{hoheiselproximal}]\label{lem:nonexpensive}
Suppose that $F(\vx)$ is $\mu$-weakly convex function, and suppose $\beta$ is such that $0 < \beta \mu < 1$. Then
\[
\Vert \prox_{\beta F}(\vx) -  \prox_{\beta F}(\vy) \Vert < \frac{1}{1-\beta \mu} \Vert \vx - \vy \Vert.
\]
\end{lemma}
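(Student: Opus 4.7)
The plan is to exploit the fact that when $0 < \beta\mu < 1$, even though $F$ itself is only weakly convex, the proximal objective acquires genuine strong convexity from the quadratic regularizer, and then to apply the standard two-point argument for firm nonexpansiveness of the prox, tracking the strong convexity constant carefully.

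Concretely, I would first set $\vx_p = \prox_{\beta F}(\vx)$, $\vy_p = \prox_{\beta F}(\vy)$, and introduce the two proximal objectives
\[
\phi_\vx(\vu) = F(\vu) + \frac{1}{2\beta}\|\vu-\vx\|^2, \qquad \phi_\vy(\vu) = F(\vu) + \frac{1}{2\beta}\|\vu-\vy\|^2.
\]
Because $F$ is $(-\mu)$-convex and the quadratic term is $(1/\beta)$-strongly convex, each $\phi_\vx$ and $\phi_\vy$ is $\left(\tfrac{1}{\beta}-\mu\right)$-strongly convex; this requires exactly the hypothesis $\beta\mu < 1$, and it also guarantees that $\vx_p, \vy_p$ are the unique minimizers.

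Next, I would apply the strong-convexity quadratic-growth bound at each minimizer, evaluated at the other minimizer, to get
\[
\phi_\vx(\vy_p) \geq \phi_\vx(\vx_p) + \tfrac{1}{2}(\tfrac{1}{\beta}-\mu)\|\vy_p-\vx_p\|^2,
\quad
\phi_\vy(\vx_p) \geq \phi_\vy(\vy_p) + \tfrac{1}{2}(\tfrac{1}{\beta}-\mu)\|\vx_p-\vy_p\|^2.
\]
Adding these and rearranging yields
\[
\bigl[\phi_\vx(\vy_p)-\phi_\vy(\vy_p)\bigr] - \bigl[\phi_\vx(\vx_p)-\phi_\vy(\vx_p)\bigr] \;\geq\; \left(\tfrac{1}{\beta}-\mu\right)\|\vx_p-\vy_p\|^2.
\]
The key calculation is now that $\phi_\vx-\phi_\vy$ is affine in $\vu$; expanding the squares gives $\phi_\vx(\vu)-\phi_\vy(\vu) = \tfrac{1}{\beta}(\vy-\vx)^T\vu + \mathrm{const}$, so the left-hand side collapses to $\tfrac{1}{\beta}(\vy-\vx)^T(\vy_p-\vx_p)$.

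Finally, I would apply Cauchy--Schwarz to $\tfrac{1}{\beta}(\vy-\vx)^T(\vy_p-\vx_p) \geq (\tfrac{1}{\beta}-\mu)\|\vx_p-\vy_p\|^2$, divide by $\|\vx_p-\vy_p\|$ (in the nontrivial case $\vx_p\neq\vy_p$), and obtain
\[
\|\vx_p-\vy_p\| \leq \frac{1/\beta}{1/\beta - \mu}\|\vy-\vx\| = \frac{1}{1-\beta\mu}\|\vy-\vx\|,
\]
which is the claim (the strict inequality in the statement holds trivially when $\vx\neq\vy$, since the Cauchy--Schwarz step is strict unless $\vy-\vx$ and $\vy_p-\vx_p$ are aligned and $\mu>0$ forces slack). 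The main subtlety, rather than an obstacle, is simply ensuring the strong-convexity constant is transferred correctly from $F$ to $\phi$; no subgradient calculus or optimality inclusion is needed, which makes the argument clean in the Clarke-subdifferential setting already established.
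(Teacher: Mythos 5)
The paper does not prove this lemma at all: it is stated as a known result and attributed to the cited reference, so there is no internal proof to compare against. Your argument is a correct, self-contained derivation via the standard route: the proximal objective $\phi_\vx$ inherits $(\tfrac{1}{\beta}-\mu)$-strong convexity from the quadratic term (this is exactly where $\beta\mu<1$ is used), quadratic growth at each of the two minimizers, the observation that $\phi_\vx-\phi_\vy$ is affine, and Cauchy--Schwarz. The constant $\tfrac{1/\beta}{1/\beta-\mu}=\tfrac{1}{1-\beta\mu}$ comes out right, and avoiding subgradient calculus is a genuine convenience given the Clarke-subdifferential setting elsewhere in the paper. The one flaw is your parenthetical about strictness: your chain of inequalities only yields $\leq$, and equality \emph{is} attained, e.g.\ for $F(\vu)=-\tfrac{\mu}{2}\|\vu\|^2$, where $\prox_{\beta F}(\vx)=\tfrac{1}{1-\beta\mu}\vx$ so that $\|\prox_{\beta F}(\vx)-\prox_{\beta F}(\vy)\|=\tfrac{1}{1-\beta\mu}\|\vx-\vy\|$ exactly. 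So the strict inequality in the lemma as stated cannot be recovered "trivially" --- it is in fact false in general and should read $\leq$; this is a defect of the statement rather than of your argument, and the non-strict bound is all that the paper's later use of the lemma (in Theorem~\ref{th:Non_convex_conv}) requires.
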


Now we give the convergence result.

\begin{theorem}\label{th:Non_convex_conv}
Let $F(\vx)$ be a $\mu$-weakly convex function; e.g. $\mu \leq 0$.
We minimize $F(\vx)$ via  \eqref{eq:bdf-proxpt} where
$\sum_{i=1}^\tau \xi_i = 1$ and 
\[
 \delta = (\tau-1) \sum_{j=1}^{\tau-1} \left  (\sum_{i=1}^{j}(\tau-i )\xi_i^2 \right) < 1.
\]
Then, given the step size requirement 
$
0\leq \beta \leq 
\frac{
1-\delta}{-\mu}
$
 we will have:
\[
 \min_{0\leq s \leq k }   \epsilon_{\beta}(\vx^{(k)}) \leq \frac{1}{1-\mu\beta} \sqrt{\frac{2(F(\vx^{(\tau)}) - F(\vx^*))}{k\beta} + \frac{\delta\sum_{s=0}^{\tau-1} \Vert \vx^{(s+1)}-\vx^{(s)} \Vert^2  }{k\beta^2}}
\]
where $\vx^{(s)}$ is an $\epsilon$-stationary and
$\epsilon_\beta(\vx) = \|\frac{\prox_{\beta F}(\vx)-\vx}{\beta}\|$.
\end{theorem}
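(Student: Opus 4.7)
The plan is to carry out a Lyapunov-style descent argument with $F$ itself as the potential, sum the one-step inequality over iterations, and then convert the accumulated bound into one on the proximal residual $\epsilon_\beta$ via Lemma \ref{lem:nonexpensive}. The analysis is essentially the classical proximal-point descent for weakly convex problems, except that the multistep mixing $\tilde\vx^{(k)} \ne \vx^{(k)}$ introduces a spurious cross term which must be absorbed using the hypothesis on $\beta$.

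First I would exploit the $(1+\beta\mu)/\beta$-strong convexity of $\Phi_k(\vz) := F(\vz) + \tfrac{1}{2\beta}\|\vz-\tilde\vx^{(k)}\|^2$, which is valid whenever $\beta < 1/(-\mu)$; since $\vx^{(k+1)} = \argmin{\vz} \Phi_k(\vz)$, plugging $\vz=\vx^{(k)}$ into the strong-convexity inequality and rearranging yields the descent estimate
\begin{equation*}
F(\vx^{(k+1)}) - F(\vx^{(k)}) \leq \tfrac{1}{2\beta}\|\vx^{(k)}-\tilde\vx^{(k)}\|^2 - \tfrac{1}{2\beta}\|\vx^{(k+1)}-\tilde\vx^{(k)}\|^2 - \tfrac{1+\beta\mu}{2\beta}\|\vx^{(k+1)}-\vx^{(k)}\|^2.
\end{equation*}
The ``bad'' first term on the right is then expanded using $\sum_i\xi_i=1$, writing $\vx^{(k)}-\tilde\vx^{(k)} = \sum_{i=1}^{\tau-1}\xi_i(\vx^{(k)}-\vx^{(k-\tau+i)})$ and telescoping each $\vx^{(k)}-\vx^{(k-\tau+i)}$ into $\tau-i$ consecutive jumps. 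Two Cauchy--Schwarz applications (one over the outer $i$-sum, one over the inner $j$-sum) yield
\begin{equation*}
\|\vx^{(k)}-\tilde\vx^{(k)}\|^2 \leq (\tau-1)\sum_{i=1}^{\tau-1}(\tau-i)\xi_i^2 \sum_{j=k-\tau+i}^{k-1}\|\vx^{(j+1)}-\vx^{(j)}\|^2.
\end{equation*}

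Summing the descent inequality over $k = \tau, \dots, K$ and swapping the order of summation so as to collect $\|\vx^{(s+1)}-\vx^{(s)}\|^2$ across all iterations in which it appears, the coefficient of each interior $s$ equals exactly $\delta$ as defined in the theorem. The interior contributions are then completely absorbed by the $-\tfrac{1+\beta\mu}{2\beta}\|\vx^{(k+1)}-\vx^{(k)}\|^2$ terms provided $\delta \leq 1+\beta\mu$, which is precisely the hypothesis $\beta \leq (1-\delta)/(-\mu)$. After cancellation only the initial segment survives, and using $F(\vx^{(K+1)}) \geq F(\vx^*)$ gives
\begin{equation*}
\sum_{k=\tau}^{K}\|\vx^{(k+1)}-\tilde\vx^{(k)}\|^2 \leq 2\beta\bigl(F(\vx^{(\tau)})-F(\vx^*)\bigr) + \delta\sum_{s=0}^{\tau-1}\|\vx^{(s+1)}-\vx^{(s)}\|^2.
\end{equation*}
Dividing by $(K-\tau+1)\beta^2$, bounding the minimum by the average, and using $\vx^{(k+1)} = \prox_{\beta F}(\tilde\vx^{(k)})$ together with Lemma \ref{lem:nonexpensive} in the form
\begin{equation*}
\epsilon_\beta(\vx^{(k+1)}) = \tfrac{1}{\beta}\bigl\|\prox_{\beta F}(\vx^{(k+1)}) - \prox_{\beta F}(\tilde\vx^{(k)})\bigr\| \leq \tfrac{1}{1-\mu\beta}\,\tfrac{\|\vx^{(k+1)}-\tilde\vx^{(k)}\|}{\beta}
\end{equation*}
supplies the $\tfrac{1}{1-\mu\beta}$ prefactor and delivers the claimed bound with $k = K-\tau+1$.

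The main obstacle will be the combinatorial bookkeeping of the double sum: after swapping the order, one must verify that the coefficient of each interior $\|\vx^{(s+1)}-\vx^{(s)}\|^2$ equals exactly $(\tau-1)\sum_{j=1}^{\tau-1}\sum_{i=1}^j(\tau-i)\xi_i^2 = \delta$, that the boundary coefficients (for $s$ near $0$ or $K$) are no larger, and that only the first $\tau$ jumps escape absorption --- so that the surviving residue matches the second summand in the target bound. The matching step-size threshold $\beta \leq (1-\delta)/(-\mu)$ is dictated precisely by the cancellation requirement $\delta \leq 1+\beta\mu$.
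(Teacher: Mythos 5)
Your proposal is correct and follows essentially the same route as the paper's proof: the same one-step descent inequality (you derive it from strong convexity of the prox subproblem, the paper from $\mu$-convexity plus the polarization identity $2a^Tb=\|a\|^2+\|b\|^2-\|a-b\|^2$, which are equivalent), the same double Cauchy--Schwarz bound on the mixing term yielding the coefficient $\delta$, the same absorption condition $1+\beta\mu-\delta\geq 0$, and the same final conversion to $\epsilon_\beta$ via Lemma \ref{lem:nonexpensive}. No substantive differences.
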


\begin{proof}
By $\mu$-convexity, we will have:

\begin{eqnarray*}
    F(\vx^{(k)}) &\geq& F(\vx^{(k+1)}) + \langle \zeta_{k+1},\,\vx^{(k)}-\vx^{(k+1)}\rangle + \frac{\mu}{2}      
     \Vert \vx^{(k+1)}-\vx^{(k)} \Vert^2 \\
     &\overset{\eqref{eq:bdf-proxpt}}{=} &F(\vx^{(k+1)}) + \frac{1}{\beta}\langle \tilde{\vx}^{(k)} -   \vx^{(k+1)},\,\vx^{(k)}-\vx^{(k+1)}\rangle + \frac{\mu}{2}      
     \Vert \vx^{(k+1)}-\vx^{(k)} \Vert^2 \\
     &\overset{(*)}{ =} &
    F(\vx^{(k+1)}) + \frac{1}{2\beta}\underbrace{\Vert \vx^{(k+1)} - \sum_{i=1}^{\tau}\xi_i \vx^{(k+i-\tau)}\Vert^2}_{= \beta^2 \Vert \zeta_{k+1}  \Vert^2} + \left(\frac{1}{2\beta}+\frac{\mu}{2}\right)\Vert \vx^{(k+1)} - \vx^{(k)} \Vert^2 \\
    &&\qquad 
    - \frac{1}{2\beta}\Vert \sum_{i=1}^{\tau-1} \xi_i(\vx^{(k+i-\tau)} -\vx^{(k)}) \Vert^2   
\end{eqnarray*}
where $\zeta_{k} \in \partial F(\vx^{(k)})$, and $(*)$ is $2a^Tb =\|a\|^2+\|b\|^2-\|a-b\|^2$.
Telescoping, 
\begin{multline*}
F(\vx^{(\tau)}) - F(\vx^*) \geq \frac{\beta}{2}\sum_{j=0}^k \Vert \zeta_{j + 1} \Vert^2 + \\ 
\left(\frac{1}{2\beta}+\frac{\mu}{2}\right)\sum_{j=0}^k \Vert \vx^{(j+1)} - \vx^{(j)} \Vert^2 - \frac{1}{2\beta}\sum_{j=0}^k \Vert \sum_{i=1}^{\tau-1} \xi_i(\vx^{(j+i)} -\vx^{(j+\tau)}) \Vert^2.
\end{multline*}
For any $n\geq 1$, 
$\|\sum_{i=1}^n a_i\|^2 \leq n\sum_{i=1}^n \|a_i\|^2$.
Indicate this fact as $(*)$.
Then
\begin{align*}
    \Vert \sum_{i=1}^{\tau-1} \xi_i(\vx^{(k+i)} -\vx^{(k+\tau)}) \Vert^2 &\overset{(*)}{\leq} (\tau-1) \sum_{i=1}^{\tau-1}\xi_i^2\Vert \vx^{(k+i)}-\vx^{(k+\tau)} \Vert^2 \\
    &\overset{(**)}{\leq} (\tau-1) \sum_{i=1}^{\tau-1}\xi_i^2\|\sum_{j=i}^{\tau-1}\vx^{(k+j)} - \vx^{(k+j-1)}\|^2 \\
    &\overset{(*)}{\leq} 
     (\tau-1) \sum_{i=1}^{\tau-1}(\tau-i )\sum_{j=i}^{\tau-1}\xi_i^2\Vert \vx^{(k+j)}-\vx^{(k+j+1)} \Vert^2 \\
    &= (\tau-1) \sum_{j=1}^{\tau-1}  \left  (\sum_{i=1}^{j} (\tau-i)\xi_i^2 \right)\Vert \vx^{(k+j)}-\vx^{(k+j+1)} \Vert^2 \\
\end{align*}
where $(**)$ is from reverse telescoping
\[
\vx^{(k+i)} - \vx^{(k+\tau)} = \sum_{j=i}^{\tau-1}(\vx^{(k+j)} - \vx^{(k+j-1)}).
\]
Hence for $c_j =  \sum_{i=1}^{j} (\tau-i)\xi_i^2 $ we have:
\begin{multline*}
 \sum_{s=0}^{k} \Vert \sum_{i=1}^{\tau-1} \xi_i(\vx^{(s+i)} -\vx^{(s+\tau)}) \Vert^2 
 \leq (\tau-1) \sum_{r=0}^{k}\sum_{j=1}^{\tau-1} c_j \Vert \vx^{(r+j)}-\vx^{(r+j+1)} \Vert^2  \\ 
 \overset{s=r+j}{=} (\tau-1)\sum_{j=1}^{\tau-1} c_j  \sum_{s=j}^{j+k}\Vert \vx^{(s)}-\vx^{(s+1)} \Vert^2   
  \leq \underbrace{(\tau-1)    \sum_{j=1}^{\tau-1}  c_j }_{=\delta}\sum_{s=0}^{k-1} \Vert \vx^{(s+1)}-\vx^{(s)} \Vert^2\\ 
\end{multline*}
and thus
\[
\frac{1}{2\beta}\sum_{j=0}^k \Vert \sum_{i=1}^{\tau-1} \xi_i(\vx^{(j+i)} -\vx^{(j+\tau)}) \Vert^2  \leq  \frac{\delta}{2\beta}\sum_{s=0}^{k+\tau-1} \Vert \vx^{(s+1)}-\vx^{(s)} \Vert^2.
\]
As a result, we will have:
\begin{multline*}
F(\vx^{(\tau)}) - F(\vx^*) \geq \frac{\beta}{2}\sum_{i=\tau}^k \Vert\zeta_{i + 1} \Vert^2 \\ + \left(\frac{1}{2\beta}+\frac{\mu}{2}-\frac{
\delta}{2\beta}\right)\sum_{i=\tau}^k \Vert \vx^{(i+1)} - \vx^{(i)} \Vert^2 - \frac{
\delta}{2\beta}\sum_{s=0}^{\tau-1} \Vert \vx^{(s+1)}-\vx^{(s)} \Vert^2.
\end{multline*}
Now, we pick $\beta,\delta$ such that $\frac{1}{2\beta}+\frac{\mu}{2}- \frac{
\delta}{2\beta} \geq 0$ (e.g. $\beta < \frac{
\delta-1}{\mu}$). Then
\begin{align*}
\frac{1}{k}\sum_{i=0}^k \Vert \zeta_{i+\tau + 1}\Vert^2   &\leq \frac{2}{k\beta}(F(\vx^{(\tau)}) - F(\vx^*)) + \frac{\delta}{ k\beta^2}\sum_{s=0}^{\tau-1} \Vert \vx^{(s+1)}-\vx^{(s)} \Vert^2. 
\end{align*}
By the nonexpansiveness of the proximal operator from Lemma \ref{lem:nonexpensive}
\begin{align*}
\beta \epsilon_\beta(\vx^{(k+1)}) &=
\Vert \vx^{(k + 1)} -  \prox_{\beta F} ( \vx^{(k + 1)} ) \Vert \\
&= \Vert  \prox_{\beta F} ( \vx^{(k)}) -  \prox_{\beta F} ( \vx^{(k + 1)} ) \Vert  \\
& \leq \frac{1}{1-\mu\beta}\Vert \vx^{(k)} -  \vx^{(k + 1)}  \Vert = \frac{\beta}{1-\mu\beta}\Vert  \zeta_{k + 1} \Vert. 
\end{align*}
Since the minimum is smaller than the average, combining the previous two inequalities gives the result. 
\end{proof}

The biggest disadvantage in the weakly convex case is the restriction on the step size, which becomes more stringent with more negative  $\mu$ (indicating weaker convexity). Overall, $\delta$ also acts as an added noise term, although the overall rate is still $O(\sqrt{(1+\delta)/k})$, provided a step size is feasible ($1>\delta$). It is not surprising that \emph{some} restriction on the amount of nonconvexity should be present, as adding too large of a  regularization in the proximal step will warp the optimization landscape. 

\subsection{Inexact proximal steps}

Although Theorem \ref{th:stronglyconvex} give linear convergence rates when the step size is large enough, the exact multi-step proximal point methods are in general  hard to solve to completion, and is usually approximated in practice. Next, we give the linear convergence rate for aproximate multi-step proximal point methods, e.g. where $\vx^{(k+1)}$ satisfies \eqref{eq:approx_contract_assp} and 
\begin{equation}
  \vx_*^{(k + 1)} =  \prox_{\beta F }(\tilde{\vx}^{(k)}), \qquad 
    \tilde{\vx}^{(k)} =  \sum_{i=1}^{\tau} \xi_i \vx^{(k-\tau+i)}.
     \label{eq:approx-prox-bdf}
\end{equation}

\begin{theorem}
\label{th:approx-strongly-convex}
Let $F(\vx)$ be a $\mu$-strongly convex  function.  
We minimize $F(\vx)$ via   \eqref{eq:approx-prox-bdf} with $\sum_i\xi_i = 1$, and $\vx^{(k)}$ satisfies $\gamma$-contractiveness \eqref{eq:approx_contract_assp} for $\gamma < 1$.  Then $\vx^{(k)}$ converges to $\vx^*$ linearly 
\[    \Vert \vx^{(k+1)} - \vx^* \Vert   \leq \left ( \gamma +\frac{(1+\gamma)\eta}{1+\beta \mu} \right)^{\lceil\frac{k+1}{\tau}\rceil}  \,\cdot\,\max_{j \in \{ 0,1,\cdots \tau-1 \}} \Vert \vx^{(j)} - \vx^* \Vert
    \]
    where  $\eta =  \sum_{i=1}^\tau |\xi_i| $.
Convergence is guaranteed when $\gamma < \frac{\beta \mu - \eta + 1 }{\beta \mu + \eta + 1}$.

\end{theorem}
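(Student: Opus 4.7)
The plan is to decompose the error $\Vert \vx^{(k+1)} - \vx^*\Vert$ via the triangle inequality into two contributions: (i) the \emph{inexactness error} $\Vert \vx^{(k+1)} - \vx_*^{(k+1)}\Vert$, which is controlled by the $\gamma$-contractiveness assumption \eqref{eq:approx_contract_assp}, and (ii) the \emph{exact prox-step contraction error} $\Vert \vx_*^{(k+1)} - \vx^*\Vert$, which is controlled exactly as in the proof of Theorem~\ref{th:stronglyconvex-anyxi}. Once both pieces are expressed in terms of $\max_{i}\Vert \vx^{(k-\tau+i)}-\vx^*\Vert$, I will feed the resulting one-step recursion through Lemma~\ref{lem:sublinearseq} to obtain the stated $\lceil (k+1)/\tau\rceil$-root linear rate.

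First, since $F$ is $\mu$-strongly convex, the proximal operator $\prox_{\beta F}$ is $\tfrac{1}{1+\beta\mu}$-contractive, and since $\vx^* = \prox_{\beta F}(\vx^*)$, I get
\[
\Vert \vx_*^{(k+1)} - \vx^*\Vert \leq \tfrac{1}{1+\beta\mu}\Vert \tilde\vx^{(k)} - \vx^*\Vert.
\]
Using $\sum_i \xi_i = 1$, I write $\tilde\vx^{(k)}-\vx^* = \sum_i \xi_i(\vx^{(k-\tau+i)}-\vx^*)$ and bound $\Vert \tilde\vx^{(k)}-\vx^*\Vert \leq \eta \max_i \Vert \vx^{(k-\tau+i)}-\vx^*\Vert$ where $\eta = \sum_i |\xi_i|$. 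Next, the $\gamma$-contractiveness gives
\[
\Vert \vx^{(k+1)} - \vx_*^{(k+1)}\Vert \leq \gamma \Vert \tilde\vx^{(k)} - \vx_*^{(k+1)}\Vert \leq \gamma\bigl(\Vert \tilde\vx^{(k)} - \vx^*\Vert + \Vert \vx^* - \vx_*^{(k+1)}\Vert\bigr),
\]
which when combined with the previous bound produces the term $\gamma\eta(1+\tfrac{1}{1+\beta\mu})\max_i\Vert\vx^{(k-\tau+i)}-\vx^*\Vert$.

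Putting the two error pieces together, the one-step recursion takes the form
\[
\Vert \vx^{(k+1)} - \vx^*\Vert \leq \left(\gamma\eta + \tfrac{(1+\gamma)\eta}{1+\beta\mu}\right)\max_{i\in\{1,\dots,\tau\}}\Vert \vx^{(k-\tau+i)}-\vx^*\Vert,
\]
which fits the hypothesis of Lemma~\ref{lem:sublinearseq} with $a_k = \Vert\vx^{(k)}-\vx^*\Vert$ (after absorbing the $\max$ into a per-block induction). Iterating yields the claimed $\lceil (k+1)/\tau\rceil$-th power bound, and the convergence condition $\gamma < \tfrac{\beta\mu-\eta+1}{\beta\mu+\eta+1}$ is just the algebraic requirement that the contraction factor be strictly less than one.

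The main obstacle I anticipate is bookkeeping around the constant: my derivation naturally produces $\gamma\eta$ rather than $\gamma$ in the leading term (matching the exact case of Theorem~\ref{th:stronglyconvex-anyxi}), so I will need to check whether the paper's $\gamma$ is meant as $\gamma\eta$, or whether a sharper argument — e.g., bounding $\Vert \tilde\vx^{(k)} - \vx_*^{(k+1)}\Vert$ directly through the optimality residual of the exact prox rather than through $\vx^*$ as an intermediate point — recovers the stated form. Either way, the structural chain (triangle inequality $+$ prox contraction $+$ inexactness $+$ Lemma~\ref{lem:sublinearseq}) is the workhorse, and the rest reduces to careful inequality manipulations that do not obstruct the overall convergence conclusion.
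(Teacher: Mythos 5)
Your proposal follows essentially the same route as the paper's proof: triangle inequality splitting the error into the inexactness piece and the exact-prox piece, $\gamma$-contractiveness plus a second triangle inequality through $\vx^*$ for the former, the $\tfrac{1}{1+\beta\mu}$ prox contraction for the latter, and Lemma~\ref{lem:sublinearseq} to unroll the $\tau$-step recursion. Your worry about the constant is well founded: the careful bound $\Vert\tilde\vx^{(k)}-\vx^*\Vert\leq\eta\max_i\Vert\vx^{(k-\tau+i)}-\vx^*\Vert$ does produce $\gamma\eta+\tfrac{(1+\gamma)\eta}{1+\beta\mu}$, whereas the paper's proof bounds $\gamma\Vert\tilde\vx^{(k)}-\vx^*\Vert$ by $\gamma\Vert\vx^{(i_1)}-\vx^*\Vert$ with no $\eta$, which is only valid when all $\xi_i\geq 0$ (so that $\eta=1$); for general sign-changing BDF coefficients your $\gamma\eta$ is the correct leading factor, and the paper's stated constant is slightly optimistic. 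This does not change the structure of the argument or the qualitative conclusion (linear convergence for $\gamma$ small enough), only the precise threshold on $\gamma$.
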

\begin{proof}
We compose a sequence of contractions:
\begin{eqnarray*}
    \Vert \vx^{(k +1)} - \vx^* \Vert & \leq&  \Vert \vx^{(k +1)} - \vx_*^{(k+1)} \Vert  + \Vert \vx_*^{(k+1)} -\vx^{*} \Vert \\
    &\overset{\eqref{eq:approx_contract_assp}}{\leq} & \gamma     \Vert  \tilde\vx^{(k)}  - \vx_*^{(k+1)}\Vert + \Vert \vx_*^{(k+1)} -\vx^{*} \Vert\\
    & \le& \gamma \left( \Vert   \tilde\vx^{(k)} - \vx^*\Vert +\Vert \vx_*^{(k+1)}  - \vx^*\Vert \right ) +\Vert \vx_*^{(k+1)} -\vx^{*} \Vert\\
    & \overset{\text{Th. \ref{th:mu_strong_conv}}}{\le}& \gamma \left( \Vert    \vx^{(i_1)} - \vx^*\Vert +\frac{\eta}{1+\beta \mu} \Vert \vx_*^{(i_1)}  - \vx^*\Vert \right ) +\frac{\eta}{1+\beta \mu} \Vert \vx_*^{(i_1)} -\vx^{*} \Vert\\
   &=& (\gamma +\frac{(\gamma + 1)\eta}{1+\beta \mu} )\Vert  \vx^{(i_1)}-\vx^*\Vert  \\
\end{eqnarray*}
and $i_1$ is as constructed in Lemma \ref{lem:sublinearseq}. Applying Lemma \ref{lem:sublinearseq} gives the desired result. 
\end{proof}

Theorem \ref{th:approx-strongly-convex} is unsurprising, and follows the same trend as that of the quadratic problems. Indeed, if $\gamma = 0$, we recover the exact rate (Th. \ref{th:stronglyconvex}, \ref{th:stronglyconvex-anyxi}).

We now consider the approximate multistep proximal point method applied to nonconvex functions. Here, a more stringent step size requirement is needed.
\begin{theorem}\label{th:Nonconvex_approx_conv}
Let $F(\vx)$ be a $\mu$-weakly convex function; i.e. $\mu \leq 0$.
We minimize $F(\vx)$ via \eqref{eq:approx-prox-bdf}, where $\vx^{(k)}$ satisfies $\gamma$-contractiveness \eqref{eq:approx_contract_assp} for $\gamma < 1$. Assume $\xi_i$ is such that    
$\sum_{i=1}^\tau \xi_i = 1$ and 
\[
\delta = (1 + \gamma^2 (1+\beta \mu)^2 )\delta_{*} < \frac{1}{2},\quad  
\delta_{*} = (\tau - 1)\sum_{j=1}^{\tau-1} \left  (\sum_{i=1}^{j}(\tau-i )\xi_i^2 \right).
\] 
Then, given the step size requirement 
$
0\leq \beta \leq 
\frac{2-4\delta}{-\mu}
$
 we will have:
\[
    \min_{0\leq s \leq k } \Vert \nabla F(\vx^{(k + 1)})\Vert \leq  \sqrt{\frac{2(F(\vx^{(\tau)}) - F(\vx^*))}{k\beta} + \frac{\delta}{k\beta^2 }\sum_{s=0}^{\tau-1} \Vert \vx^{(s+1)}-\vx^{(s)} \Vert^2}.
\] 
\end{theorem}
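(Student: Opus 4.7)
My plan is to mirror the proof of Theorem \ref{th:Non_convex_conv} while carefully tracking the perturbation introduced by the inexact prox step. Denote the exact prox step $\vx_*^{(k+1)} = \prox_{\beta F}(\tilde{\vx}^{(k)})$, the error $e^{(k+1)} = \vx^{(k+1)} - \vx_*^{(k+1)}$, and the exact subgradient $\zeta^*_{k+1} := \frac{1}{\beta}(\tilde{\vx}^{(k)} - \vx_*^{(k+1)}) \in \partial F(\vx_*^{(k+1)})$. By $\gamma$-contractiveness, $\|e^{(k+1)}\| \le \gamma\beta\|\zeta^*_{k+1}\|$, since $\|\tilde{\vx}^{(k)} - \vx_*^{(k+1)}\| = \beta\|\zeta^*_{k+1}\|$.

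First I would run the same derivation as in Theorem \ref{th:Non_convex_conv}, but applied to the pair $(\vx^{(k)}, \vx_*^{(k+1)})$ with subgradient $\zeta^*_{k+1}$. Using $\mu$-convexity and the polarization identity $2a^Tb = \|a\|^2 + \|b\|^2 - \|a-b\|^2$, this produces the intermediate inequality
\[
F(\vx^{(k)}) - F(\vx_*^{(k+1)}) \;\geq\; \tfrac{\beta}{2}\|\zeta^*_{k+1}\|^2 + \tfrac{1+\beta\mu}{2\beta}\|\vx^{(k)} - \vx_*^{(k+1)}\|^2 - \tfrac{1}{2\beta}\|\tilde{\vx}^{(k)} - \vx^{(k)}\|^2,
\]
which is exactly the exact-case inequality with the asterisked variant of the prox point. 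Note that the coefficient $(1+\beta\mu)/(2\beta)$ on the middle term is the key place where $(1+\beta\mu)$ enters the analysis.

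Next I would translate this inequality from $\vx_*^{(k+1)}$ to $\vx^{(k+1)}$ via the substitution $\vx_*^{(k+1)} = \vx^{(k+1)} - e^{(k+1)}$. Expanding $\|\vx^{(k)} - \vx_*^{(k+1)}\|^2$ via Young's inequality generates $\|\vx^{(k)} - \vx^{(k+1)}\|^2$ plus a penalty term proportional to $\|e^{(k+1)}\|^2$ scaled by the coefficient $(1+\beta\mu)/(2\beta)$; bounding $\|e^{(k+1)}\|^2 \leq \gamma^2\beta^2\|\zeta^*_{k+1}\|^2$ converts this into a correction of the form $\tfrac{\gamma^2(1+\beta\mu)^2}{2\beta}\|\zeta^*_{k+1}\|^2$, which is exactly how the $(1+\gamma^2(1+\beta\mu)^2)$ pre-factor in the theorem's definition of $\delta$ is born. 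The objective-value swap $F(\vx_*^{(k+1)}) \leftrightarrow F(\vx^{(k+1)})$ is handled by $L$-smoothness (descent lemma), contributing errors of the form $L\|e^{(k+1)}\|^2$ and $\|\zeta^*_{k+1}\|\|e^{(k+1)}\|$, absorbed either into the $\tfrac{\beta}{2}\|\zeta^*_{k+1}\|^2$ term or by tightening the step-size bound.

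Finally I would bound the cross term $\|\tilde{\vx}^{(k)} - \vx^{(k)}\|^2 = \|\sum_{i=1}^{\tau-1}\xi_i(\vx^{(k+i-\tau)} - \vx^{(k)})\|^2$ by two applications of Cauchy-Schwarz together with the reverse-telescoping identity $\vx^{(k+i)} - \vx^{(k+\tau)} = \sum_{j=i}^{\tau-1}(\vx^{(k+j)} - \vx^{(k+j+1)})$, exactly as in the exact proof; this yields the $\delta_*$ coefficient on $\sum \|\vx^{(s+1)} - \vx^{(s)}\|^2$. Combined with the inexactness correction, the effective coefficient becomes $\delta = (1+\gamma^2(1+\beta\mu)^2)\delta_*$. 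Telescoping from $s=\tau$ to $s = k+\tau$ requires the coefficient in front of $\|\vx^{(s+1)} - \vx^{(s)}\|^2$ to be non-negative, and the factor of $2$ inflation (compared to the exact case's $\beta < (1-\delta_*)/(-\mu)$) to $\beta \le (2-4\delta)/(-\mu)$ tracks to the Young's-inequality constants used in splitting the $\|\vx^{(k)} - \vx_*^{(k+1)}\|^2$ term. At the end, translating $\|\zeta^*_{k+1}\|$ into $\|\nabla F(\vx^{(k+1)})\|$ via $L$-smoothness and applying the $\min \le \text{average}$ bound finishes the argument.

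The main obstacle will be the Step 2 bookkeeping: the Young's splitting constants must be chosen tightly enough that the combined penalty recovers precisely the stated $\delta = (1+\gamma^2(1+\beta\mu)^2)\delta_*$ rather than a looser factor that would restrict $\beta$ further, and the swap between $\|\zeta^*_{k+1}\|$ (available from the exact prox optimality) and $\|\nabla F(\vx^{(k+1)})\|$ (the quantity in the conclusion) has to be done without losing a multiplicative factor that would spoil the clean $\sqrt{1/k}$ rate.
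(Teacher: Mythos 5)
Your overall plan---run the exact-case argument at the exact prox point $\vx_*^{(k+1)}$ and then perturb---is not the route the paper takes, and as sketched it has two concrete gaps. The paper never needs $F(\vx_*^{(k+1)})$: it writes the $\mu$-convexity inequality directly between the \emph{actual} iterates $\vx^{(k)}$ and $\vx^{(k+1)}$ with the gradient $\nabla F(\vx^{(k+1)})$, applies the polarization identity so that the only correction term is $\|A_k\|^2$ with $A_k = \vx^{(k)}-\vx^{(k+1)}-\beta\nabla F(\vx^{(k+1)})$, and then uses the exact-prox optimality condition (encoded as $B_k = \vx_*^{(k+1)}-\tilde\vx^{(k)}+\beta\nabla F(\vx_*^{(k+1)})=0$) to rewrite $A_k$ as $C_k+D_k$, where $C_k=\tilde\vx^{(k)}-\vx^{(k)}$ is the multistep mixing residual and $D_k$ collects the inexactness. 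The estimate $\|A_k\|^2\le 2\|C_k\|^2+2\|D_k\|^2$ is the source of the factor $2$ in the conditions $\delta<\tfrac12$ and $\beta\le(2-4\delta)/(-\mu)$ (not Young's-inequality constants, as you guessed), and the bound $\|D_k\|\le\gamma(1+\beta L)\|C_k\|$ routes the inexactness through the \emph{same} $\delta_*$-type sum as $C_k$, which is exactly how the prefactor $(1+\gamma^2(1+\beta\,\cdot\,)^2)$ ends up multiplying $\delta_*$. Because everything is already expressed at the actual iterates, the telescoping of $F(\vx^{(k)})-F(\vx^{(k+1)})$ and the final quantity $\|\nabla F(\vx^{(k+1)})\|$ come out for free, with no function-value swap and no gradient conversion at the end.

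Your route stumbles precisely at those two swaps. First, replacing $F(\vx_*^{(k+1)})$ by $F(\vx^{(k+1)})$ via the descent lemma costs a term of order $\langle\nabla F(\vx_*^{(k+1)}),e^{(k+1)}\rangle\le\gamma\beta\|\zeta^*_{k+1}\|^2$ per iteration; for $\gamma$ close to $1$ this exceeds the $\tfrac{\beta}{2}\|\zeta^*_{k+1}\|^2$ term you are trying to keep, so the coefficient of the gradient norm goes negative unless you impose something like $\gamma<1/2$---a condition the theorem does not assume. ``Tightening the step-size bound'' cannot rescue this, since the loss scales with $\beta$ exactly as the term that must absorb it. Second, your proposed origin of the prefactor is misplaced: the Young's-inequality penalty from splitting $\|\vx^{(k)}-\vx_*^{(k+1)}\|^2$ is proportional to $\|e^{(k+1)}\|^2\le\gamma^2\beta^2\|\zeta^*_{k+1}\|^2$, i.e., it again degrades the gradient-norm coefficient rather than attaching to $\sum_s\|\vx^{(s+1)}-\vx^{(s)}\|^2$, so you would not recover $\delta=(1+\gamma^2(1+\beta\mu)^2)\delta_*$ in the stated form. (As an aside, the paper's own proof produces $(1+\beta L)$ in this factor while the statement says $(1+\beta\mu)$, so the exact constant is already inconsistent; but the structural point stands: for the claimed $\delta$ to appear, the inexactness must be measured against the mixing residual $C_k$, not against the prox residual $\beta\zeta^*_{k+1}$.)
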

\begin{proof} 
Recall that the proximal operation's optimality condition ensures 
\begin{equation}
\vx_*^{(k+\tau + 1)} =  
    \tilde{\vx}^{(k+\tau)} - \beta \nabla F(\vx_*^{(k+\tau+1)}).
     \label{eq:prox-opt-cond}
\end{equation}
By the $\mu$-convex property, we have 
\begin{eqnarray*}
    F(\vx^{(k)}) &\geq& F(\vx^{(k + 1)}) +   \langle  \nabla F(\vx^{(k + 1)}),\vx^{(k)}-\vx^{(k + 1)}\rangle + \frac{\mu}{2}      
     \Vert \vx^{(k + 1)}-\vx^{(k)} \Vert^2\\
     &=&
    F(\vx^{(k + 1)}) + \frac{\beta}{2} \Vert\nabla F(\vx^{(k + 1)}) \Vert^2 + 
     \left(\frac{1}{2\beta}+\frac{\mu}{2}\right)\Vert \vx^{(k + 1)}  - \vx^{(k)} \Vert^2  \\ 
     &&\qquad  - \frac{1}{2\beta}\Vert \vx^{(k)} -\vx^{(k + 1)} - \beta \nabla F(\vx^{(k + 1)}) \Vert^2.  
\end{eqnarray*}
Take 
\begin{eqnarray*}
A_k &=& \vx^{(k)} -\vx^{(k + 1)} - \beta \nabla F(\vx^{(k + 1)}),\\
B_k &=&\vx_{*}^{(k+1)}  - \sum_{i}\xi_i \vx^{(k+i-\tau)} + \beta  \nabla F(\vx_*^{(k+1)}),\\
C_k &=& \sum_{i=1}^{\tau-1} \xi_i(\vx^{(k+i-\tau)} -\vx^{(k)}),\\
D_k &=& \vx^{(k + 1)} + \beta \nabla F(\vx^{(k + 1)}) - \vx_{*}^{(k+1)}  - \beta  \nabla F(\vx_*^{(k + 1)}).
\end{eqnarray*}
Then $A_k+B_k=C_k+D_k$.
By \eqref{eq:prox-opt-cond},  $B_k = 0$. So, 
\[
 \|A_k\|^2 =\|A_k+B_k\|^2  = \|C_k+D_k\|^2 \leq 2\|C_k\|^2+2\|D_k\|^2.
 \]
In addtion, by $\gamma$-contraction and $\mu-$ smoothness, we have 
\begin{eqnarray*}
    \|D_k\|
    &\leq & 
      \Vert \vx^{(k + 1)} - \vx_{*}^{(k+1)} \| + \beta \|  \zeta_{k + 1}  -  \zeta^{*}_{k}  \|
      \leq 
     \gamma (1+\beta L)\|C_k \|.
\end{eqnarray*}
And, similar to Theorem \ref{th:Non_convex_conv}, we have 
\begin{eqnarray*}
\sum_{s=0}^k \|C_{s+\tau}\|^2
&\leq& \underbrace{(\tau-1)    \sum_{j=1}^{\tau-1}  \sum_{i=1}^{j} (\tau-i)\xi_i^2 }_{=\delta_*}\sum_{s=0}^{k-1} \Vert \vx^{(s+1)}-\vx^{(s)} \Vert^2\\
\end{eqnarray*}
and thus
\[
\sum_{s=0}^k \|C_{s+\tau}\|^2 + \|D_{s+\tau}\|^2=
\underbrace{(1+\gamma^2(1+\beta L)^2)\delta^* }_{\delta }\sum_{s=0}^{k-1}\|\vx^{(s+1)}-\vx^{(s)}\|^2.
\]
Therefore, via telescoping from $k = \tau$ to termination, we will have:
\begin{multline*}   
F(\vx^{(\tau)}) - F(\vx^*) \geq \frac{\beta}{2} \sum_{i=0}^k \Vert \nabla F(\vx^{(i + 1)})\Vert^2 \\
 + \left(\frac{1-2\delta}{\beta}+\frac{\mu}{2}  \right )\sum_{i=0}^k \Vert \vx^{(i+1)} - \vx^{(i)} \Vert^2 
- \frac{\delta}{2\beta}\sum_{s=0}^{\tau-1} \Vert \vx^{(s+1)}-\vx^{(s)} \Vert^2.
\end{multline*}
As a result, provided $\beta \leq \frac{2-4\delta}{-\mu}$, we have
\begin{align*}
\frac{1}{k} \sum_{k=0}^k \Vert \nabla F(\vx^{(k + 1)})\Vert^2 &\leq \frac{2(F(\vx^{(\tau)}) - F(\vx^*))}{k\beta} + \frac{\delta}{k\beta^2 }\sum_{s=0}^{\tau-1} \Vert \vx^{(s+1)}-\vx^{(s)} \Vert^2.
\end{align*}
\end{proof}

Taking $\gamma = 0$ would indeed result in an exact proximal rate, but it is not as tight as that given in Theorem \ref{th:Non_convex_conv}; in particular, the restriction on $\delta$ is more stringent by a factor of $1/2$. However, the proof technique is indeed very similar, and the overall trend of $O(\sqrt{(1+\delta)/k})$ is still similar. 
Overall, the rates in this section is consistent with those for single-step approximate proximal methods, and indicates that \emph{under very general construction of $\xi_i$}, stability and convergence are not only guaranteed, but at worse only  constant factor $\tau$ less optimal than the single-step version. 

In the next section, we will see that in applications, the $\tau$-multistep methods are actually \emph{improvements} upon single-step methods, especially in applications where instability is observed. We will revisit theoretical rates again with an \emph{accelerated} rate for the specific application of alternating projections.

\section{Applications}
\label{sec:applications}

%
In this section, we empirically validate our proposed methods by considering several optimization problems: proximal gradient with $\ell_1$ norm, proximal gradient with LSP penalty,  alternating minimization for matrix factorization, and alternating projection on linear subspaces.

\subsection{Multi-step prox gradient} The  first approach approximates
\begin{align}
\label{eq:proximal_mapping}
 \prox_{\alpha f + \alpha h}(\tilde \vx^{(k)}) \approx \prox_{ \alpha h}(\tilde \vx^{(k)} - \alpha \nabla f(\tilde \vx^{(k)}))  .
\end{align}
Here, $\tilde \vx^{(k)}$ is defined using the BDF mixing coefficients as defined in \eqref{eq:bdf-mixing} and Table \ref{tab:bdf-const}.  
Note that the major multi-step extension is in the construction of $\tilde \vx^{(k)}$; specifically, the \emph{per-iteration complexity} of computing $\tilde \vx^{(k)}$ differently does \emph{not} significantly add computational overhead -- but it does affect convergence behavior.

\paragraph{Proximal gradient with 1-norm.}
(Figure \ref{fig:Prox-L1}.)
The proximal gradient with $\ell_1$ norm over compressed sensing problem is formulated as:
\begin{align*}
    \min_{\vx 
    \in \mathbb{R}^q}\; \frac{1}{2} \Vert \vA \vx - \vb \Vert^2 +\lambda \Vert \vx \Vert_1
\end{align*}
where $\vA \in \mathbb{R}^{p \times q}$ and $\vb \in \mathbb{R}^p$, with   $ p  \ll q$ (underdetermined system) and the $\ell_1$ norm is to encourage sparsity in the solution. We choose $p = 50$ and $ q = 100$ and various choices of $\lambda$. 
In this and the next LSP problem, we construct $\vA$ to have three types of conditioning, with $\vA = U\Sigma V^T$, for  randomly chosen orthogonal $U$ and $V$, and $\Sigma = \diag(\sigma_1,...,\sigma_p)$ where 
we distribute $\sigma_r$ as uniformly distributed (very good conditioning), $1/r$ (bad conditioning), or $\exp(-r)$ (very bad conditioning) for $r = 1,...,p$. 
%
%
%
In all cases, we see consistent improvement of the higher-order BDF scheme, across all conditioning, and with more improvement when $\lambda$ is larger.

\begin{figure}[!ht]
    \centering
 \includegraphics[width=.8\linewidth]{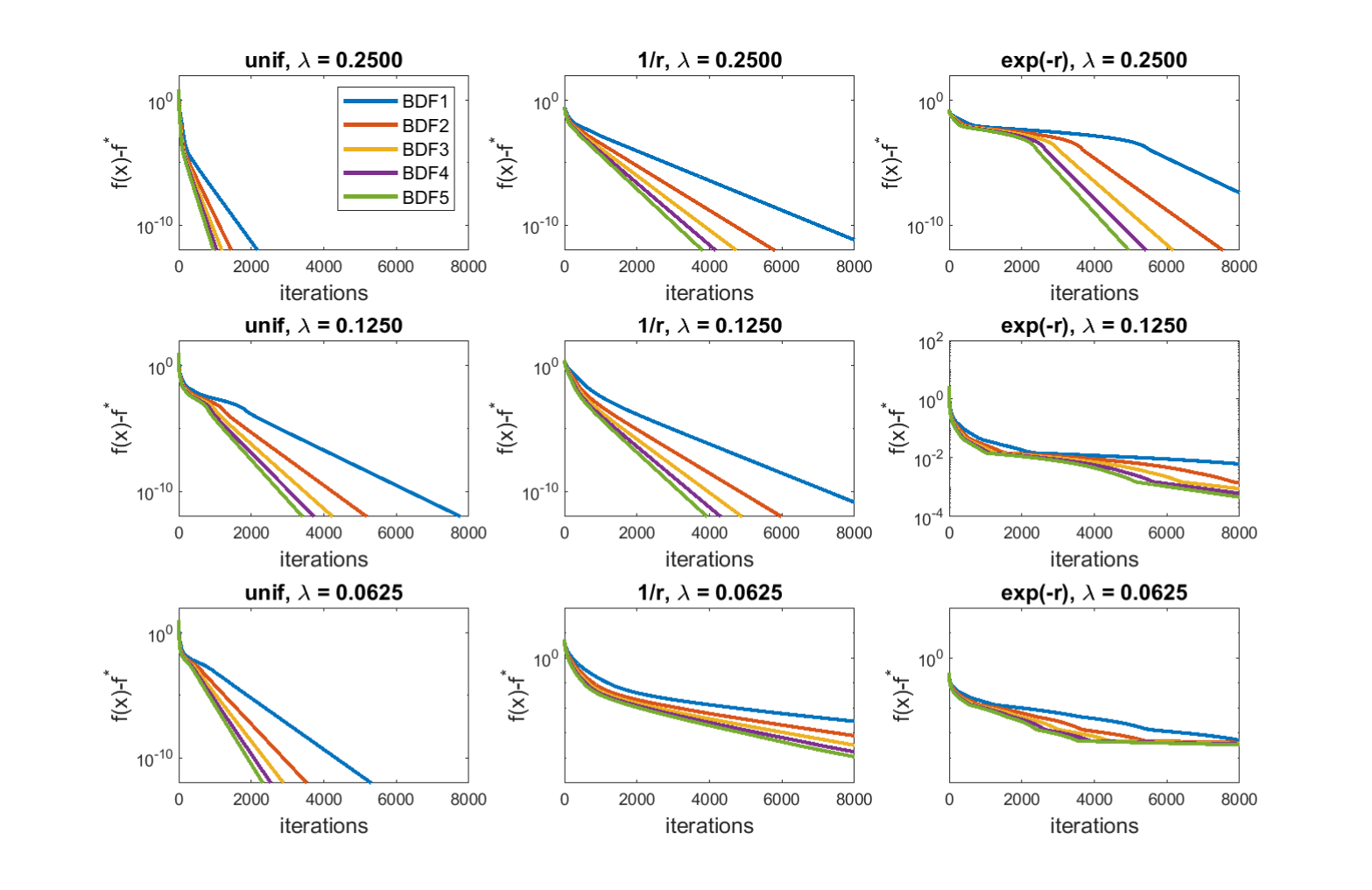}
    \caption{\textbf{Comparsion of different BDF schemes for   proximal gradient with $\ell_1$ penalty. }
    Data is constructed with singular values uniformly distributed (unif), with inverse decay ($1/r$), or with exponential decay ($\exp(-r)$) to simulate sensing under different sensing matrix rank conditions.
     }
    \label{fig:Prox-L1}
\end{figure}

\paragraph{LSP.}
(Figure \ref{fig:Prox-LSP}.)
The proximal gradient with LSP penalty over compressed sensing is formulated as follows:
\begin{align*}
    \min_{\vx 
    \in \mathbb{R}^q} \frac{1}{2} \Vert \vA \vx - \vb \Vert^2 +  \sum_{i=1}^q \log\left(1+\frac{|x_i|}{\theta}\right)
\end{align*}
where  $\vA \in \mathbb{R}^{p \times q}$ and $\vb \in \mathbb{R}^p$. 
Here, $p = 20$ and $q = 50$ (and the system is underdetermined).
We use  \eqref{eq:lspflow}
to compute the proximal operator of the LSP penalty. 
Compared to the $\ell_1$ norm, the nonconvex LSP norm is a more aggressive sparsifier.
Usually, $\theta$ can be used to control the amount of concavity near 0, making sparsification more aggressive. 
%
%
%
%
%
%
 Similar to $\ell_1$ norm setting, the higher-order BDF performs better than lower order BDF scheme. However, there is a more marked improvement at even smaller precisions; the improvement seems faster than linear.

\begin{figure}[!ht]
    \centering
    \includegraphics[width=.8\linewidth]{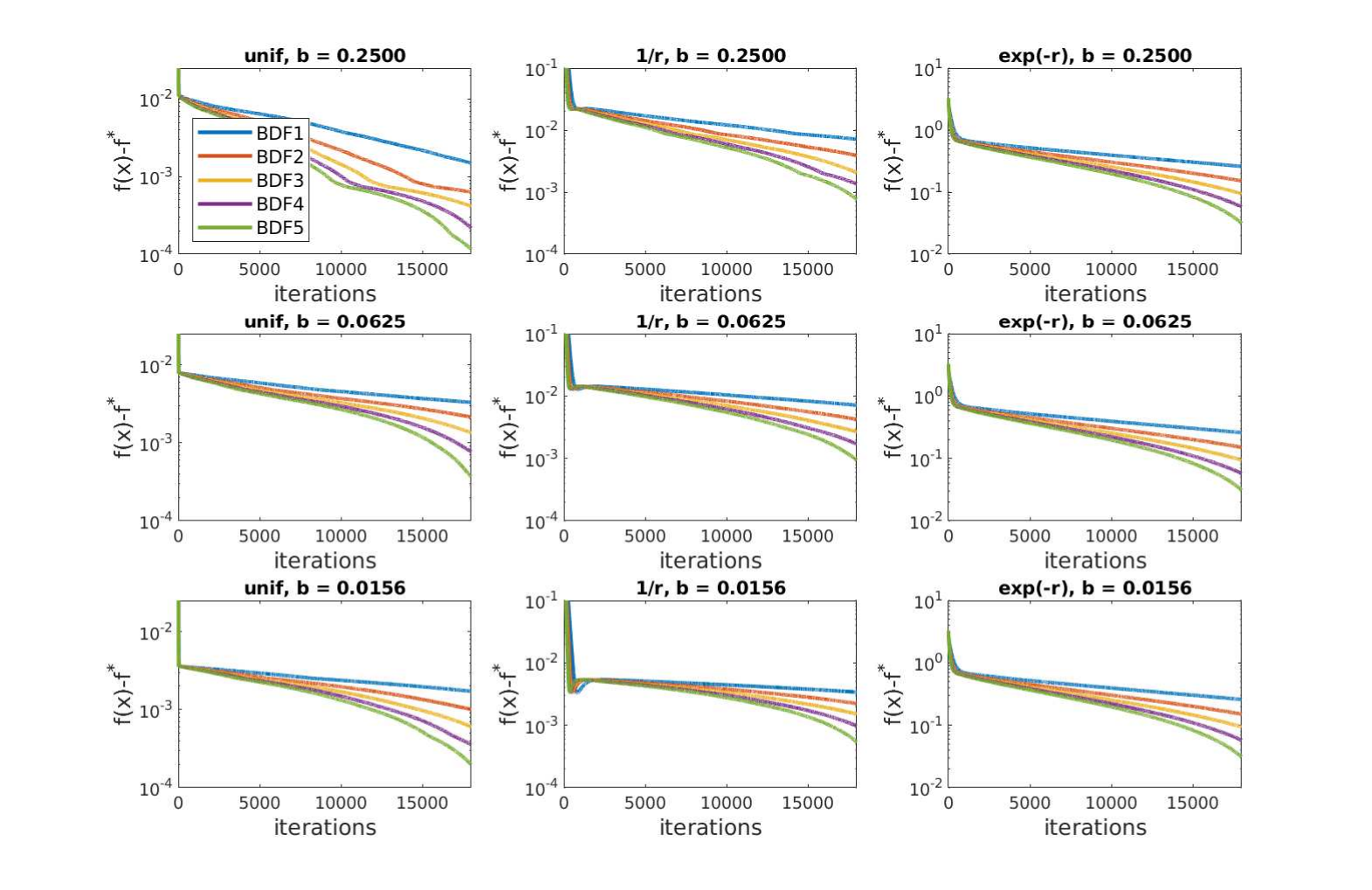}
    \caption{\textbf{Comparsion of different BDF schemes for proximal gradient with   LSP (nonconvex) penalty.}  Data is constructed with singular values uniformly distributed (unif), with inverse decay ($1/r$), or with exponential decay ($\exp(-r)$) to simulate sensing under different sensing matrix rank conditions.
     }
    \label{fig:Prox-LSP}
\end{figure}

\subsection{Multi-step alternating projections}
The next approach is to minimize the projection on an intersection of subspaces
\begin{equation}
 \min_{\vx\in \mathcal{C}_1  \bigcap \mathcal{C}_2} \quad \|\vx-\vx_0\|^2
 \label{eq:projsubspaces}
\end{equation}
where $\mathcal C_1$ and $\mathcal C_2$ are closed linear subspaces. 
The approach is inspired by the alternating projection method \citep{von1949rings}  in which for $\vx^{(0)} = \vx_0$,  performs 
\begin{equation}
\vy^{(k+1)} = \proj_{\mathcal{C}_1}({\vx}^{(k)}), \qquad 
\vx^{(k+1)} = \proj_{\mathcal{C}_2}(\vy^{(k + 1)}),
\label{eq:altproj}
\end{equation}
and as $k\to +\infty$, $\vx^{(k)},\vy^{(k)}$ converge to the solution of \eqref{eq:projsubspaces}.
In other words, for the breakdown
\[
f(\vx) = \frac{1}{2}\|\vx-\vx_0\|^2, \qquad h(\vx) = \mI_{\mC_1\cap \mC_2}
\]
we view \eqref{eq:altproj} as approximating
\[
\prox_{\alpha f + \alpha h}(\tilde \vx) = \proj_{\mC_1\cap \mC_2}(\frac{\alpha \vx_0 +  \tilde \vx}{\alpha+1})\overset{\alpha\to 0}{\to} \proj_{\mC_1\cap \mC_2}(\tilde  \vx)
\]
and we run 
\begin{align}
\vy^{(k+1)} = \proj_{\mathcal{C}_1}(\tilde {\vx}^{(k)}),\qquad 
\vx^{(k+1)} = \proj_{\mathcal{C}_2}(\vy^{(k + 1)}).   
\label{eq:altproj3}
\end{align}

Our results on alternating projects is in Figure \ref{fig:Prox-altproj}.
Here, we take 
$\mathcal{C}_1$ and $\mathcal{C}_2$ to be the column space of matrix $\mathbf{C}_1 $, $\mathbf{C}_2\in \mathbb R^{500\times 400}$. We first produce matrices $\mathbf{C}_1$ and $\mb Z$ where each element is $\sim\mathcal N(0,1)$, and set $\mb C_2 = (1-\sigma)\mb C_1 + \sigma \mb Z$. The size of $\sigma\in [0,1]$ controls the coherency between the two subspaces; if the coherency is high ($\sigma$ is small), then the problem becomes more ill-conditioned. In our experiments, we found that in general, higher $\tau$ produces faster convergence, and instability for high $\tau$ only seems to occur when $\sigma$ is large (conditioning is not bad).

\begin{figure}[!ht]
    \centering
        \includegraphics[width=.8\linewidth]{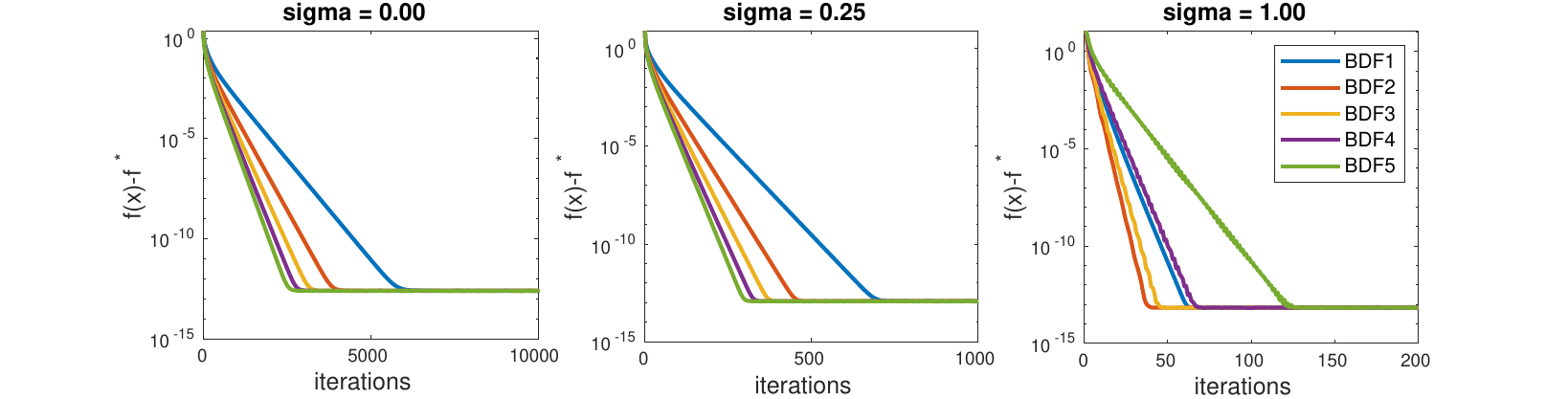}

    \caption{\textbf{Comparsion of different BDF schemes for alternating linear projections.} $\sigma$ is the noise parameter that controls the angle between the subspaces; smaller means more ill-conditioned. 
     }
   \label{fig:Prox-altproj}
\end{figure}

\subsection{Multi-step alternating minimization} The final approach is based on alternating minimization  \citep{shi2016primer,nesterov2012efficiency,liu2014asynchronous,fercoq2015accelerated} to solve a (possibly nonconvex) problem 
\[
F(\vx) = f(\vx_1,\vx_2), \qquad h(\vx) = 0.
\] 
The idea is to  interpret the (regularized) alternating minimization steps 
\begin{eqnarray*}
\vx_{1}^{(k+1)}& =& \argmin{\vx_1} \;f(\vx_1,\vx_2^{(k)}) + \frac{1}{2\alpha} \|\vx_1-\vx_1^{(k)}\|^2 \\
\vx_{2}^{(k+1)}& =& \argmin{\vx_2}\; f(\vx_1^{(k+1)},\vx_2 ) + \frac{1}{2\alpha} \|\vx_2-\vx_2^{(k+1)}\|^2\\
\end{eqnarray*}
as approximations of the  proximal point operations
\begin{multline*}
(\vx_{1}^{(k+1)} ,
\vx_{2}^{(k+1)}) \approx  \argmin{\vx_1,\vx_2} \; f(\vx_1 ,\vx_2 ) +\frac{1}{2\alpha} \|\vx_1-\vx_1^{(k)}\|^2 + \frac{1}{2\alpha} \|\vx_2-\vx_2^{(k)}\|^2.
\end{multline*}
That is, the approximation is done by splitting the variable into  blocks and minimizing each block separately.
The multi-step extension is then done by 
\begin{align*}
{\mathbf{x}}^{(k+1)}_1 &= \argmin{\mathbf{x}_1}\, f(\vx_1, \tilde{\mathbf{x}}^{(k)}_2)                         + \frac{1}{2\alpha} \Vert \mathbf{x}_1 - \tilde{\mathbf{x}}^{(k)}_1 \Vert^2,\\
{\mathbf{x}}^{(k+1)}_2 &= \argmin{\mathbf{x}_2} \,f({\mathbf{x}}^{(k+1)}_1, \vx_2 )                        + \frac{1}{2\alpha} \Vert \mathbf{x}_2 - \tilde{\mathbf{x}}^{(k)}_2 \Vert^2.
\end{align*}

\paragraph{Matrix factorization.}
 (Figure \ref{fig:Prox-altmin}.) 
 We apply the higher order alternating minimization scheme to the  matrix factorization problem:
\begin{align*}
    \min_{\vU,\vV}\; \frac{1}{2}\Vert \vU \vV^{T} - \vR \Vert_F^2.
\end{align*}
%
For the experiment settings, we choose $\vR$ as a $100 \times 100$ matrix and assign $\vU,\vV\in \R^{100\times r}$ random values in $\mathcal{N}(0,1)$, and where $r$ is the rank of the completed matrix. 
%
Here, we see some clear advantages for multstep order $\tau = 2,3$, but when $\tau = 4,5$ then there is considerably instability, and the performance degrades. This suggests that the restricted stability region of the higher order scheme is being violated in this problem setting. 
The overall convergence seems faster when $r$ is larger (better conditioned problems) and when $\alpha$ is larger (larger step sizes).

\begin{figure}[!ht]
    \centering
             
        \includegraphics[width=.8\linewidth]{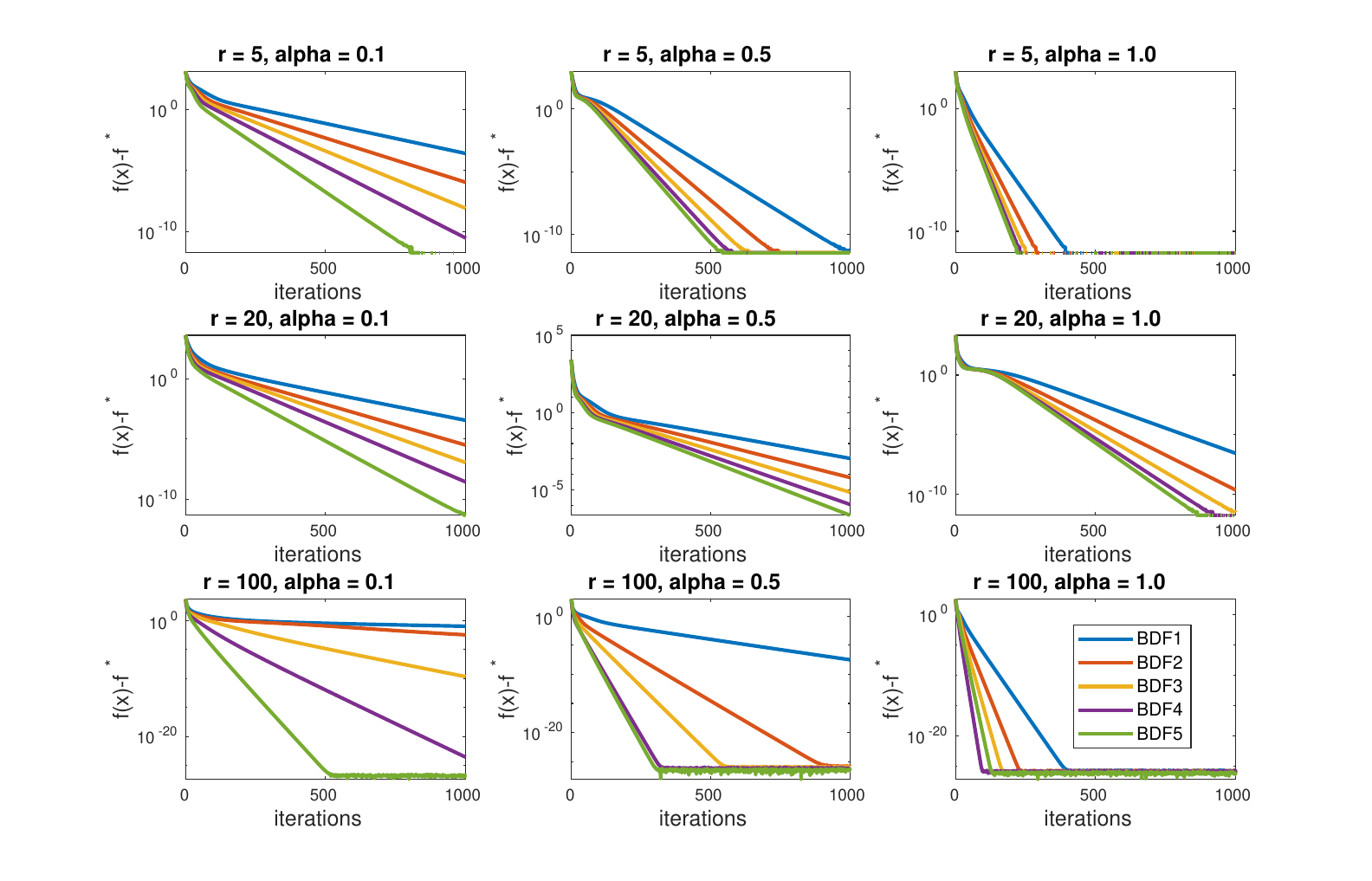}

    \caption{\textbf{Comparsion of  BDF schemes for    alternating minimizations for matrix factorization.} Here $r$ is the matrix rank  and $\alpha$ the step size of the inner loop.
     }
    \label{fig:Prox-altmin}
     
\end{figure}

\section{Acceleration of alternating projections}
\label{sec:altproj}
Finally, we demonstrate that the weights of BDF can be tuned for acceleration, in the case of the alternating projection method. Heere, we consider the projection on $\mC_1\cap \mC_2$ where $\Pi_1$ and $\Pi_2$ are the  idempotent projection matrices such that 
\[
\Pi_1 \mb x = \proj_{\mC_1}(\mb x), \qquad \Pi_2 \mb x = \proj_{\mC_2}(\mb x).
\]
Then, a vanilla alternating projections method performs 
\[
\vx^{(k+1)} = \Pi_1\Pi_2 \vx^{(k)} = (\Pi_1\Pi_2)^{k+1}\vx^{(0)}.
\]
We perform the Jordan canonical decomposition  for $M := \Pi_1\Pi_2 = VDV^{-1}$ where $D = J+\Lambda$, 
and $J$ is 0 everywhere except  possibly at its first offdiagonal ($J_{i,i+1}\in \{0,1\}$), and $\Lambda$ is a diagonal matrix containing the   eigenvalues of $M$.  
Since $\Pi_1$ and $\Pi_2$ are projection matrices, their eigenvalues are all $\in \{-1,1\}$, so the  eigenvalues of $M$ must also exist in this bound. 
Then
\[
\vr^{(k)} = (I-\Pi_1\Pi_2) \vx^{(k)} = (I-\Pi_1\Pi_2) (\Pi_1\Pi_2)^k \vx^{(0)} = V (I-D) D^k V^{-1}\vx^{(0)}
\]
and
\[
D^k = (\Lambda+J)^k = \Lambda^k + O(\Lambda^{k-1}).
\]
Now, define $\rho$ as the smallest nonzero generalized eigenvalue of $I - \Pi_1\Pi_2$, e.g. 
\[
\rho := \min_{i:\Lambda_{ii}\neq 0} 1-\Lambda_{ii}.
\]
 Then, it is clear that 
\begin{eqnarray*}
\|\vr^{(k)}\| &\leq& \rho ((1-\rho)^k + O((1-\rho)^{k-1}) )\|\vx^{(0)}\|_2\\
&\leq& \rho \|\vx^{(0)}\|_2\exp(-\rho k)+ O(\exp(-\rho(k-1))).
\end{eqnarray*}
That is, the constant $1/\rho$ is a measure of \emph{problem conditioning}; when $\rho$ is very small, it means that $\mC_1$ and $\mC_2$ have a subspace that is very close together. Note that in this regime,  the alternating projections operations tend to zig-zag. 
\begin{lemma}
    Alternating projections using $\tau$ multi-step updates converges at  rate $O(\exp(-\eta k))$ where $\eta = \max_{i,j}\eta_{i,j}$ is the largest root of 
    \[
\left(\sum_{s=2}^\tau \xi_s \eta_{i,j}^{2-s} +  \eta_{i,j}^{-(\tau-1)} \xi_1\right) \lambda_i = \eta_{i,j}, \quad j = 1,...,\tau,\quad i=1,...,n.
\]

\end{lemma}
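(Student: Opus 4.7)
The plan is to reduce the multi-step alternating projection to a family of decoupled scalar linear recurrences by diagonalizing $M = \Pi_1\Pi_2$, so that the asymptotic decay is governed by the roots of a per-eigenvalue characteristic polynomial whose form is precisely the equation stated in the lemma.

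First I would write the update as $\vx^{(k+1)} = M\tilde{\vx}^{(k)} = M\sum_{i=1}^{\tau}\xi_i \vx^{(k-\tau+i)}$, substituting \eqref{eq:altproj} into the BDF mixing \eqref{eq:bdf-mixing}. Using the Jordan decomposition $M = V(J+\Lambda)V^{-1}$ already introduced above and setting $\vy^{(k)} = V^{-1}\vx^{(k)}$, the coordinates decouple modulo the nilpotent part into a scalar order-$\tau$ linear recurrence
\[
y_i^{(k+1)} \;=\; \lambda_i \sum_{s=1}^{\tau} \xi_s\, y_i^{(k-\tau+s)}.
\]

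Next I would apply the standard ansatz $y_i^{(k)} = \eta^k$, substitute into this recurrence, and cancel common powers of $\eta$; after isolating $\eta$ on one side, the resulting identity is exactly the equation in the lemma, yielding (for each $i$) a degree-$\tau$ characteristic polynomial with roots $\eta_{i,1},\ldots,\eta_{i,\tau}$ and general solution $y_i^{(k)} = \sum_{j=1}^{\tau} c_{i,j}\eta_{i,j}^{k}$. The eigenvalues $\lambda_i = 1$ correspond to components lying in $\mC_1\cap\mC_2$ and coincide with the fixed point of the projection; factoring these out leaves the residual $\vr^{(k)}$ driven by the remaining eigencoordinates, whose asymptotic size is controlled by $\eta := \max_{i,j}|\eta_{i,j}|$ over the non-fixed indices. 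Reassembling in the original basis through $V$ then gives the claimed rate $|\eta|^k = O(\exp(-\eta k))$ in the same sense the earlier bound $(1-\rho)^k \approx \exp(-\rho k)$ was written.

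Finally I would handle the Jordan blocks: when $J \neq 0$, the $\vy$-recurrence acquires an upper-triangular coupling between coordinates within a block of size $p$, and the corresponding contributions pick up at most polynomial prefactors $k^{p-1}$, which are dominated by the exponential $\exp(-\eta k)$ at the cost of a negligible loss in the exponent. The main obstacle I anticipate is not the algebra of extracting the characteristic polynomial, but rather the bookkeeping at boundary cases where roots coalesce or where the characteristic polynomial has roots on the unit circle that do not come from $\mC_1\cap\mC_2$; here one either argues uniform control of the coefficients $c_{i,j}$ by a Vandermonde-type nonsingularity argument, or invokes a perturbation argument in $\lambda_i$ (using Rouché's theorem on the root polynomial) to keep all non-fixed-point roots strictly inside the unit disk for the prescribed choice of $\xi_s$.
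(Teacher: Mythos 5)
Your proposal is correct and arrives at the same characteristic equation as the paper, but by a different decomposition. The paper never diagonalizes $\Pi_1\Pi_2$: it lifts the iteration to the block companion matrix $A$ acting on $\R^{\tau n}$, computes $\det(A-\eta I)=(-\eta)^{(\tau-1)n}\det\bigl(\bar\xi(\eta)\Pi_1\Pi_2-\eta I\bigr)$ by block elimination (Schur complements), and then invokes Gelfand's rule $\|\vr^{(k)}\|\lesssim \lambda_{\max}(A)^k$ to get the rate. You instead pass to the Jordan basis of $M=\Pi_1\Pi_2$ first, so each eigencoordinate obeys a scalar order-$\tau$ linear recurrence whose roots you read off by the standard ansatz $y^{(k)}=\eta^k$. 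The two routes are equivalent, but yours buys an explicit treatment of repeated roots and Jordan blocks (the polynomial prefactors $k^{p-1}$), which the paper sweeps into the asymptotic nature of Gelfand's formula, while the paper's route produces the determinant factorization without any assumption on the eigenstructure of $\Pi_1\Pi_2$. One caveat: your ansatz does not reproduce the displayed equation \emph{exactly}. Substituting $y^{(k)}=\eta^k$ into $y^{(k+1)}=\lambda_i\sum_{s=1}^\tau\xi_s y^{(k-\tau+s)}$ gives $\eta^\tau=\lambda_i\sum_{s=1}^\tau\xi_s\eta^{s-1}$, which pairs $\xi_\tau$ (the weight on the most recent iterate) with $\eta^{\tau-1}$; the lemma's formula instead pairs $\xi_2$ with $\eta^{\tau-1}$, i.e.\ the interior coefficients appear in reversed order. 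The two agree for $\tau=2$ (the only case the paper subsequently exploits) and for symmetric weights, but differ for $\tau\ge 3$; your pairing is the one consistent with the update $\tilde\vx^{(k)}=\sum_{i=1}^\tau\xi_i\vx^{(k-\tau+i)}$, so you should flag the reindexing rather than assert exact agreement with the stated equation.
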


\begin{proof}
Define $\tilde \xi = (\xi_2,...,\xi_\tau) \in \R^{\tau-1}$, and recall that the variable $\mb x^{(k)}\in \R^n$. 
     For multicase updates, define  
     \begin{eqnarray*}
     \vz^{(k)} &=&  \begin{bmatrix} (\mb x^{(k-\tau+1)})^T & (\vu^{(k)})^T \end{bmatrix}^T \in \R^{\tau n}, \\ 
     \vu^{(k)}&:=& \begin{bmatrix} (\vx^{(k-\tau+2)})^T & \cdots & (\vx^{(k)})^T\end{bmatrix}^T \in \R^{(\tau-1) n}.
     \end{eqnarray*}
     Here,  we initialize $\vz^{(0)} = (\vx^{(0)},..,\vx^{(0)})$.
     Then  $\mb x^{(k+1)} = \Pi_1\Pi_2 \tilde {\mb x}^{(k)} $ and
     \begin{eqnarray*}
      \tilde{\mb x}^{(k)} = \sum_{i=1}^\tau \xi_i \mb x^{(k-\tau+i)} = \xi_1 \mb x^{(k+\tau+1)}+(\tilde \xi^T\otimes I) \mb u^{(k)}, 
     \end{eqnarray*}
    can be written succinctly as 
 \[
 \vz^{(k+1)} = 
 \begin{bmatrix}
     \vx^{(k+1)}\\
     \vu^{(k)}
 \end{bmatrix}
 =
 \begin{bmatrix}
    \Pi_1\Pi_2 (\tilde \xi^T \otimes I) & \xi_1 \Pi_1\Pi_2  \\ 
     I_{n(\tau-1)} & 0
 \end{bmatrix}
 \begin{bmatrix}
     \vu^{(k)}\\
     \vx^{(k-\tau+1)}
 \end{bmatrix} =:A \mb z^{(k)}.
 \]
 Now  by Gelfand's rule, since
 \[
 \mb r^{(k)} = (I-\Pi_1\Pi_2)\mb x^{(k)} = (I-\Pi_1\Pi_2)(I\otimes \mb e_1^T)\mb z^{(k)} = (I-\Pi_1\Pi_2)(I\otimes \mb e_1^T)A^k\mb z^{(0)}
 \]
 then 
 \[
 \|\mb r^{(k)}\| \leq \lambda_{\max}(A)^k \rho \tau ^2 \|\mb x^{(0)}\|.
 \]
 It is left to derive $\lambda_{\max}(A)$. By using Schur complements, it can be shown that if $\eta$ is an eigenvalue of $A$, then the characteristic polynomial
\begin{eqnarray*}
\det(A - \eta I)
  &=& (-\eta)^{(\tau-1)n} \det( \underbrace{(\xi_2  
  + \eta^{-1} 
 \xi_3   + \cdots +  \eta^{-(\tau-1)} \xi_1) }_{\bar \xi (\eta) }\Pi_1\Pi_2 - \eta I)
\end{eqnarray*}
and 
\[
\det(A-\eta I) = 0 \iff \eta = 0 \text{ or } \det\left(\bar \xi(\eta)\Pi_1\Pi_2 -  \eta I\right) = 0.
\]
Define $\eta_{i,j}$ as the $j$th root of the following polynomial, for the $i$th eigenvalue of $\Pi_1\Pi_2$. Then it must be that 
$
\bar \xi(\eta_{i,j}) \lambda_i = \eta_{i,j}.
$
\end{proof}
In general, it is not clear how to identify these roots.
We offer a tighter analysis for $\tau = 2$.
\begin{theorem}
    The 2nd order multi-step method with coefficients
    \[
\xi_1 = \frac{-\rho  + \sqrt{\rho}}{1-\rho}, \qquad \xi_2 =  \frac{1 -  \sqrt{\rho}}{1-\rho}
\]
 achieves an accelerated rate
    \[
    \|\vr^{(k)}\| \leq  2\rho \|\vx^{(0)}\|\exp(-\sqrt{\rho} k).
    \]
\end{theorem}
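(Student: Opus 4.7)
The plan is to specialize the preceding lemma to $\tau = 2$ and verify that the proposed $\xi_1, \xi_2$ are Chebyshev-calibrated so as to pin the maximal root modulus at $1 - \sqrt{\rho}$. Concretely, for each eigenvalue $\lambda$ of $M := \Pi_1 \Pi_2$, the per-eigenvalue characteristic polynomial inherited from the preceding lemma is
\[
\eta^2 - \xi_2 \lambda \eta - \xi_1 \lambda = 0.
\]
Since the residual $\vr^{(k)} = (I - \Pi_1 \Pi_2)\vx^{(k)}$ annihilates the fixed-point (unit-eigenvalue) subspace of $M$, only the non-fixed spectrum $\lambda \in [0, 1 - \rho]$ contributes, and the task reduces to showing $|\eta| \leq 1 - \sqrt{\rho}$ uniformly on this interval.

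First I would substitute the stated coefficients and use the factorization $1 - \rho = (1 - \sqrt{\rho})(1 + \sqrt{\rho})$ to reduce the quadratic to a form depending only on $\sqrt{\rho}$ and $\lambda$. The crux is to verify that the worst-case eigenvalue $\lambda = 1 - \rho$ induces a \emph{double} root at $\eta = 1 - \sqrt{\rho}$ --- this is the Chebyshev-style calibration underlying the coefficient choice, and is equivalent to the pair of Vi\`ete identities $\xi_2 \lambda = 2(1 - \sqrt{\rho})$ and $-\xi_1 \lambda = (1 - \sqrt{\rho})^2$ holding simultaneously at $\lambda = 1 - \rho$. For $\lambda < 1 - \rho$ the discriminant of the same quadratic should flip sign, so that the two roots become complex conjugates of common modulus $\sqrt{-\xi_1 \lambda}$, a quantity monotonic in $\lambda$ and therefore strictly bounded by $1 - \sqrt{\rho}$ on $[0, 1-\rho)$. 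Together, these two checks give $\max_{i,j}|\eta_{i,j}| \leq 1 - \sqrt{\rho}$.

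Next, I would lift this spectral bound to the companion matrix $A$ of the preceding lemma via Gelfand's formula, giving $\|A^k\| \lesssim (1 - \sqrt{\rho})^k$ asymptotically. Combining with the explicit factorization
\[
\vr^{(k)} = (I - \Pi_1 \Pi_2)(I \otimes \mathbf{e}_1^T) A^k \vz^{(0)},
\]
and using that $I - \Pi_1 \Pi_2$ annihilates the fixed-point eigenspace of $M$ while contributing at most the factor $\rho$ through its action on the residual-carrying eigenspaces (exactly as in the unaccelerated analysis at the start of this section), yields $\|\vr^{(k)}\| \leq 2 \rho \|\vx^{(0)}\| (1 - \sqrt{\rho})^k$. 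The elementary inequality $(1 - \sqrt{\rho})^k \leq \exp(-\sqrt{\rho}\,k)$ then produces the claimed rate.

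The main technical obstacle will be the double-root regime itself. At $\lambda = 1 - \rho$, the companion matrix $A$ is defective, so $A^k$ carries a polynomial-in-$k$ correction from the Jordan block, yielding only $\|A^k\| = O\!\bigl(k\,(1 - \sqrt{\rho})^k\bigr)$ in the true bound rather than a clean geometric decay. This extra factor of $k$ must be absorbed into the slack between $(1 - \sqrt{\rho})^k$ and $\exp(-\sqrt{\rho}\,k)$, and when combined with the $\tau = 2$ initialization overhead $\|\vz^{(0)}\| \leq \sqrt{2}\|\vx^{(0)}\|$, this is what ultimately produces the leading constant $2$ in the statement. A secondary subtlety worth flagging is that the bound is asymptotic in $k$ (as Gelfand's rule is), so the statement should be read in that sense; a non-asymptotic version would require an explicit estimate of the sub-geometric term from the Jordan correction.
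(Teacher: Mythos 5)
Your approach is essentially the paper's: reduce to the per-eigenvalue quadratic $\eta^2 - \xi_2\lambda\eta - \xi_1\lambda = 0$ from the preceding lemma, calibrate the coefficients to produce a double root at $1-\sqrt{\rho}$ when $\lambda = 1-\rho$, and lift to the companion matrix via Gelfand's rule. In fact your version is more careful than the paper's on two counts: the paper never checks the non-critical eigenvalues $\lambda < 1-\rho$ (your discriminant-sign argument, giving complex roots of modulus $\sqrt{-\xi_1\lambda} \le 1-\sqrt{\rho}$, closes that gap), and the paper does not acknowledge the Jordan-block defectiveness at the double root or the asymptotic nature of the Gelfand bound, both of which you flag correctly.

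One substantive warning: the Vi\`ete check you propose as the "crux" will \emph{fail} for the coefficients as literally stated in the theorem. The identities $\xi_2(1-\rho) = 2(1-\sqrt{\rho})$ and $-\xi_1(1-\rho) = (1-\sqrt{\rho})^2$ force
\[
\xi_1 = -\frac{(1-\sqrt{\rho})^2}{1-\rho} = -\frac{1-\sqrt{\rho}}{1+\sqrt{\rho}}, \qquad \xi_2 = \frac{2(1-\sqrt{\rho})}{1-\rho} = \frac{2}{1+\sqrt{\rho}},
\]
whereas the statement gives $\xi_2$ half this value and $\xi_1$ with the opposite sign; with the stated $\xi_1 > 0$ the two roots are real of opposite sign and the larger one stays near $1-\rho$, so no acceleration occurs. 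The paper's own proof actually derives the corrected coefficients above (via $\xi_1 = (\lambda - 2 + 2\sqrt{1-\lambda})/\lambda$, $\xi_2 = (2 - 2\sqrt{1-\lambda})/\lambda$ at $\lambda = 1-\rho$, taking the lower sign) without noticing the mismatch with the theorem statement. So do carry out your substitution step explicitly --- it is the step that exposes the typo --- and then proceed with the corrected coefficients; the rest of your argument goes through as planned.
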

 
\begin{proof}
    In the case of $\tau = 2$,  the characteristic polynomial becomes a quadratic equation
    \[
    \xi_2\lambda_k \eta +  \xi_1 \lambda_k = \eta^2
    \]
    may be written  explicitly as
\[
\eta_{k,1}, \eta_{k,2} = \frac{  \lambda_k\xi_2   \pm \sqrt{(\lambda_k\xi_2)^2 + 4 \lambda_k \xi_1 }}{2}.
\]
Specifically, picking
\[
\xi_1 = \frac{\lambda_k - 2 \mp 2\sqrt{1-\lambda_k}}{\lambda_k}, \qquad \xi_2 =  \frac{2 \pm  2\sqrt{1-\lambda_k}}{\lambda_k}
\]
then
$
\eta_{k,1},\eta_{k,2} = 1\pm \sqrt{1-\lambda_k} 
$,
which means the modulus of the eigenvalues of $I - A$ are $\sqrt{1-\lambda_k}$, with smallest nonzero value being $\sqrt{\rho}$. 
So,  since 
\[
\vx^{(k)} = [0,...,0,I] \vz^{(k)}, \quad
 \vr^{(k)} = [0,...,0,I](I-A)A^k \vz^{(0)},
 \]
 then

 \begin{eqnarray*}
     \|\vr^{(k)}\| &\leq&   \tau^2\rho(1-\sqrt{\rho})^{k}\|\vx^{(0)}\| + O((1-\sqrt{\rho})^{k-1}) \\
     &=& \tau^2\rho \exp(-k\sqrt{\rho})\|\vx^{(0)}\|+O(\exp(-(k-1)\sqrt{\rho})).
 \end{eqnarray*}
This is an acceleration of the problem's dependence on its condition value $1/\rho$. 
 \end{proof}

Note that in the above analysis, the specific BDF constants of $\xi_1 = -1/3$, $\xi_2 = 4/3$ apply specifically when $\rho = 3/4$, and provide an acceleration from $O((3/4)^k)$ to $O((1/2)^k)$. For a wider range of $\rho \in (0,2/3)$, acceleration of a lesser amount of acceleration is also possible is also possible. 
It is also to find higher order accelerating schemes, by solving higher order polynomials.

\paragraph{Discussion.} In general, tuning multi-step weights indeed has the potention to offer improved convergence behavior; however, tuning $\xi_i$ is a tedious process, and may require access to problem parameters that are difficult to compute. For example, knowing $\rho$ for two random subspaces requires knowing the smallest eigenvalue of $\Pi_1\Pi_2$, which in computing (for example via power iteration) would require as much complexity as solving the original problem. Therefore, this section is largely presented to satisfy a theoretical curiosity, but we believe the original BDF weights are better choices in practice. 

\section{Conclusion}
\label{sec:conclusion}
The goal of this work is to investigate the use of approximate implicit discretizations of \eqref{eq:prox_flow} in badly conditioned non-smooth problem settings. In this work, we found that the higher-order approximate implicit discretization helps in many optimization problems. However, it is worth pointing out that it is also important to find an efficient manner of choosing the approximate methods. In our work, we focus on the BDF method, which works well in practice. It is worth also investigating other types of multi-step methods and a wider range of problems for which acceleration may be possible.

\clearpage
\bibliographystyle{plainnat}
\bibliography{refs.bib}

\begin{thebibliography}{33}
\providecommand{\natexlab}[1]{#1}
\providecommand{\url}[1]{\texttt{#1}}
\expandafter\ifx\csname urlstyle\endcsname\relax
  \providecommand{\doi}[1]{doi: #1}\else
  \providecommand{\doi}{doi: \begingroup \urlstyle{rm}\Url}\fi

\bibitem[Ascher and Petzold(1998)]{ascher1998computer}
Uri~M Ascher and Linda~R Petzold.
\newblock \emph{Computer methods for ordinary differential equations and
  differential-algebraic equations}, volume~61.
\newblock Siam, 1998.

\bibitem[Asi and Duchi(2019)]{asi2019stochastic}
Hilal Asi and John~C Duchi.
\newblock Stochastic (approximate) proximal point methods: Convergence,
  optimality, and adaptivity.
\newblock \emph{SIAM Journal on Optimization}, 29\penalty0 (3):\penalty0
  2257--2290, 2019.

\bibitem[Asi et~al.(2020)Asi, Chadha, Cheng, and Duchi]{asi2020minibatch}
Hilal Asi, Karan Chadha, Gary Cheng, and John~C Duchi.
\newblock Minibatch stochastic approximate proximal point methods.
\newblock \emph{Advances in neural information processing systems},
  33:\penalty0 21958--21968, 2020.

\bibitem[Candes and Recht(2012)]{candes2012exact}
Emmanuel Candes and Benjamin Recht.
\newblock Exact matrix completion via convex optimization.
\newblock \emph{Communications of the ACM}, 55\penalty0 (6):\penalty0 111--119,
  2012.

\bibitem[Clarke(1990)]{clarke1990optimization}
Frank~H Clarke.
\newblock \emph{Optimization and nonsmooth analysis}.
\newblock SIAM, 1990.

\bibitem[Davis and Grimmer(2019)]{davis2019proximally}
Damek Davis and Benjamin Grimmer.
\newblock Proximally guided stochastic subgradient method for nonsmooth,
  nonconvex problems.
\newblock \emph{SIAM Journal on Optimization}, 29\penalty0 (3):\penalty0
  1908--1930, 2019.

\bibitem[Drusvyatskiy and Lewis(2018)]{drusvyatskiy2018error}
Dmitriy Drusvyatskiy and Adrian~S Lewis.
\newblock Error bounds, quadratic growth, and linear convergence of proximal
  methods.
\newblock \emph{Mathematics of Operations Research}, 43\penalty0 (3):\penalty0
  919--948, 2018.

\bibitem[Du and Pardalos(1995)]{du1995minimax}
Ding-Zhu Du and Panos~M Pardalos.
\newblock \emph{Minimax and applications}, volume~4.
\newblock Springer Science \& Business Media, 1995.

\bibitem[Facchinei and Pang(2003)]{facchinei2003finite}
Francisco Facchinei and Jong-Shi Pang.
\newblock \emph{Finite-dimensional variational inequalities and complementarity
  problems}.
\newblock Springer, 2003.

\bibitem[Fercoq and Richt{\'a}rik(2015)]{fercoq2015accelerated}
Olivier Fercoq and Peter Richt{\'a}rik.
\newblock Accelerated, parallel, and proximal coordinate descent.
\newblock \emph{SIAM Journal on Optimization}, 25\penalty0 (4):\penalty0
  1997--2023, 2015.

\bibitem[Friedman et~al.(2008)Friedman, Hastie, and
  Tibshirani]{friedman2008sparse}
Jerome Friedman, Trevor Hastie, and Robert Tibshirani.
\newblock Sparse inverse covariance estimation with the graphical lasso.
\newblock \emph{Biostatistics}, 9\penalty0 (3):\penalty0 432--441, 2008.

\bibitem[Gelfand(1941)]{gelfand1941normierte}
Izrail Gelfand.
\newblock Normierte ringe.
\newblock \emph{Matematicheskii Sbornik}, 9\penalty0 (1):\penalty0 3--24, 1941.

\bibitem[Hoheisel et~al.()Hoheisel, Laborde, and Oberman]{hoheiselproximal}
Tim Hoheisel, Maxime Laborde, and Adam Oberman.
\newblock On proximal point-type algorithms for weakly convex functions and
  their connection to the backward {E}uler method.

\bibitem[Korpelevich(1976)]{korpelevich1976extragradient}
Galina~M Korpelevich.
\newblock The extragradient method for finding saddle points and other
  problems.
\newblock \emph{Matecon}, 12:\penalty0 747--756, 1976.

\bibitem[Krichene et~al.(2015)Krichene, Bayen, and
  Bartlett]{krichene2015accelerated}
Walid Krichene, Alexandre Bayen, and Peter~L Bartlett.
\newblock Accelerated mirror descent in continuous and discrete time.
\newblock \emph{Advances in neural information processing systems}, 28, 2015.

\bibitem[Lin et~al.(2015)Lin, Mairal, and Harchaoui]{lin2015universal}
Hongzhou Lin, Julien Mairal, and Zaid Harchaoui.
\newblock A universal catalyst for first-order optimization.
\newblock \emph{Advances in neural information processing systems}, 28, 2015.

\bibitem[Liu et~al.(2014)Liu, Wright, R{\'e}, Bittorf, and
  Sridhar]{liu2014asynchronous}
Ji~Liu, Steve Wright, Christopher R{\'e}, Victor Bittorf, and Srikrishna
  Sridhar.
\newblock An asynchronous parallel stochastic coordinate descent algorithm.
\newblock In \emph{International Conference on Machine Learning}, pages
  469--477. PMLR, 2014.

\bibitem[Moreau(1965)]{moreau1965proximite}
Jean-Jacques Moreau.
\newblock Proximit{\'e} et dualit{\'e} dans un espace hilbertien.
\newblock \emph{Bulletin de la Soci{\'e}t{\'e} math{\'e}matique de France},
  93:\penalty0 273--299, 1965.

\bibitem[Nesterov(2012)]{nesterov2012efficiency}
Yu~Nesterov.
\newblock Efficiency of coordinate descent methods on huge-scale optimization
  problems.
\newblock \emph{SIAM Journal on Optimization}, 22\penalty0 (2):\penalty0
  341--362, 2012.

\bibitem[Nesterov(2021)]{nesterov2021inexact}
Yurii Nesterov.
\newblock Inexact high-order proximal-point methods with auxiliary search
  procedure.
\newblock \emph{SIAM Journal on Optimization}, 31\penalty0 (4):\penalty0
  2807--2828, 2021.

\bibitem[Orecchia and Diakonikolas(2018)]{orecchia2018accelerated}
Lorenzo Orecchia and Jelena Diakonikolas.
\newblock Accelerated extra-gradient descent: a novel accelerated first-order
  method.
\newblock 2018.

\bibitem[Romero and Benosman(2020)]{romero2020finite}
Orlando Romero and Mouhacine Benosman.
\newblock Finite-time convergence in continuous-time optimization.
\newblock In \emph{International Conference on Machine Learning}, pages
  8200--8209. PMLR, 2020.

\bibitem[Schropp and Singer(2000)]{schropp2000dynamical}
Johannes Schropp and I~Singer.
\newblock A dynamical systems approach to constrained minimization.
\newblock \emph{Numerical functional analysis and optimization}, 21\penalty0
  (3-4):\penalty0 537--551, 2000.

\bibitem[Shi et~al.(2019)Shi, Du, Su, and Jordan]{shi2019acceleration}
Bin Shi, Simon~S Du, Weijie Su, and Michael~I Jordan.
\newblock Acceleration via symplectic discretization of high-resolution
  differential equations.
\newblock \emph{Advances in Neural Information Processing Systems}, 32, 2019.

\bibitem[Shi et~al.(2021)Shi, Du, Jordan, and Su]{shi2021understanding}
Bin Shi, Simon~S Du, Michael~I Jordan, and Weijie~J Su.
\newblock Understanding the acceleration phenomenon via high-resolution
  differential equations.
\newblock \emph{Mathematical Programming}, pages 1--70, 2021.

\bibitem[Shi et~al.(2016)Shi, Tu, Xu, and Yin]{shi2016primer}
Hao-Jun~Michael Shi, Shenyinying Tu, Yangyang Xu, and Wotao Yin.
\newblock A primer on coordinate descent algorithms.
\newblock \emph{arXiv preprint arXiv:1610.00040}, 2016.

\bibitem[Su et~al.(2014)Su, Boyd, and Candes]{su2014differential}
Weijie Su, Stephen Boyd, and Emmanuel Candes.
\newblock A differential equation for modeling nesterov’s accelerated
  gradient method: theory and insights.
\newblock \emph{Advances in neural information processing systems}, 27, 2014.

\bibitem[Sundaramoorthi and Yezzi(2018)]{sundaramoorthi2018variational}
Ganesh Sundaramoorthi and Anthony Yezzi.
\newblock Variational pdes for acceleration on manifolds and application to
  diffeomorphisms.
\newblock \emph{Advances in Neural Information Processing Systems}, 31, 2018.

\bibitem[Tibshirani(1996)]{tibshirani1996regression}
Robert Tibshirani.
\newblock Regression shrinkage and selection via the lasso.
\newblock \emph{Journal of the Royal Statistical Society Series B: Statistical
  Methodology}, 58\penalty0 (1):\penalty0 267--288, 1996.

\bibitem[Von~Neumann(1949)]{von1949rings}
John Von~Neumann.
\newblock On rings of operators. reduction theory.
\newblock \emph{Annals of Mathematics}, pages 401--485, 1949.

\bibitem[Wilson et~al.(2019)Wilson, Mackey, and
  Wibisono]{wilson2019accelerating}
Ashia~C Wilson, Lester Mackey, and Andre Wibisono.
\newblock Accelerating rescaled gradient descent: Fast optimization of smooth
  functions.
\newblock \emph{Advances in Neural Information Processing Systems}, 32, 2019.

\bibitem[Yuan and Lin(2006)]{yuan2006model}
Ming Yuan and Yi~Lin.
\newblock Model selection and estimation in regression with grouped variables.
\newblock \emph{Journal of the Royal Statistical Society Series B: Statistical
  Methodology}, 68\penalty0 (1):\penalty0 49--67, 2006.

\bibitem[Zhang et~al.(2018)Zhang, Mokhtari, Sra, and
  Jadbabaie]{zhang2018direct}
Jingzhao Zhang, Aryan Mokhtari, Suvrit Sra, and Ali Jadbabaie.
\newblock Direct runge-kutta discretization achieves acceleration.
\newblock \emph{Advances in neural information processing systems}, 31, 2018.

\end{thebibliography}

\end{document}